\tikzset{snakeit/.style={decorate, decoration={snake, amplitude=.2mm,segment length=1mm}}}
\tikzset{ext/.style={circle, draw,inner sep=1pt}, int/.style={circle,draw,fill,inner sep=1pt},nil/.style={inner sep=1pt}}
\tikzset{cy/.style={circle,draw,fill,inner sep=2pt},scy/.style={circle,draw,inner sep=2pt},scyx/.style={draw,cross out,inner sep=2pt},scyt/.style={draw,regular polygon,regular polygon sides=3,inner sep=0.95pt}}
\tikzset{exte/.style={circle, draw,inner sep=3pt},inte/.style={circle,draw,fill,inner sep=3pt}}
\tikzset{diagram/.style={matrix of math nodes, row sep=3em, column sep=2.5em, text height=1.5ex, text depth=0.25ex}}
\tikzset{diagram2/.style={matrix of math nodes, row sep=0.5em, column sep=0.5em, text height=1.5ex, text depth=0.25ex}}
\tikzset{rowcolsep/.style={column sep=.2cm, row sep=.1cm}}
\tikzset{
  crossed/.style={
    decoration={markings,mark=at position .5 with {\arrow{|}}},
    postaction={decorate},
    shorten >=0.4pt}}
\tikzset{every picture/.style={baseline=-.65ex} }
\tikzset{every loop/.style={draw}}
  \tikzset{->-/.style={decoration={
    markings,
    mark=at position .5 with {\arrow{>}}},postaction={decorate}}}
\theoremstyle{plain}
\newtheorem{thm}{Theorem}[section]
\newtheorem{prop}[thm]{Proposition}
\newtheorem{cor}[thm]{Corollary}
\newtheorem{lemma}[thm]{Lemma}
\theoremstyle{definition}
\newtheorem{rem}[thm]{Remark}
\newcommand{\alg}[1]{\mathfrak{{#1}}}
\newcommand{\C}{{\mathbb{C}}}
\newcommand{\Z}{{\mathbb{Z}}}
\newcommand{\Q}{{\mathbb{Q}}}
\newcommand{\Gpd}{{\mathcal{G}\mathrm{pd}}}
\newcommand{\BV}{\mathsf{BV}}
\newcommand{\trunc}{\mathrm{trunc}}
\newcommand{\Com}{\mathsf{Com}}
\newcommand{\G}{\mathrm{G}}
\newcommand{\Exp}{\mathrm{Exp}}
\newcommand{\bpm}{\begin{pmatrix}}
\newcommand{\epm}{\end{pmatrix}}
\newcommand{\MC}{\mathsf{MC}}
\newcommand{\hotimes}{\mathbin{\hat\otimes}}
\newcommand{\grt}{\alg {grt}}
\newcommand{\e}{\mathsf{e}}
\newcommand{\Op}{\mathcal{O}\mathrm{p}}
\newcommand{\WOp}{\mathcal{WO}\mathrm{p}}
\newcommand{\Mon}{\mathcal{M}\mathrm{on}}
\newcommand{\La}{\Lambda}
\newcommand{\HOpc}{\mathrm{HOp}^c}
\newcommand{\cat}{\mathsf}
\newcommand{\Free}{\mathbb{F}}
\newcommand{\Seq}{\mathcal{S}\mathit{eq}}
\newcommand{\lD}{\mathsf{D}}
\newcommand{\flD}{\mathsf{fD}}
\newcommand{\Map}{\mathrm{Map}}
\newcommand{\Mor}{\mathrm{Mor}}
\newcommand{\Aut}{\mathrm{Aut}}
\newcommand{\pAut}{\widetilde{Aut}}
\newcommand{\gr}{\mathrm{gr}}
\newcommand{\GRT}{\mathrm{GRT}}
\newcommand{\GT}{\mathrm{GT}}
\newcommand{\fc}{{\mathfrak c}}
\newcommand{\beq}[1]{\begin{equation}\label{#1}}
\newcommand{\eeq}{\end{equation}}
\newcommand{\dgca}{\mathrm{dgca}}
\newcommand{\GOp}{G\Op}
\newcommand{\OpG}{\Op^{G/}}
\newcommand{\OphG}{\Op^{\hat G/}}
\newcommand{\SO}{\mathrm{SO}}
\renewcommand{\O}{\mathrm{O}}
\newcommand{\ar}{\mathrm{ar}}
\newcommand{\sset}{\mathit{s}\mathcal{S}\mathit{et}}
\newcommand{\fg}{\mathfrak{g}}
\newcommand{\sSet}{\sset}
\DeclareMathAlphabet{\mathsfit}{OT1}{cmss}{m}{sl}
\DeclareMathOperator{\POp}{{\mathsfit{P}}}
\DeclareMathOperator{\QOp}{{\mathsfit{Q}}}
\newcommand{\Pair}{{\mathcal P\mathit{air}}}
\DeclareMathOperator{\id}{\mathit{id}}
\newcommand{\pab}{\mathsf{PaB}}
\newcommand{\parb}{\mathsf{PaRB}}
\newcommand{\hpab}{\widehat{\pab}}
\newcommand{\hparb}{\widehat{\parb}}
\newcommand{\B}{\mathcal{B}}
\author{Geoffroy Horel}
\address{Institut Galilée\\
Université Sorbonne Paris Nord\\
99 avenue Jean-Baptiste Clément \\
93430 Villetaneuse, France. }
\email{horel@math.univ-paris13.fr}
\author{Thomas Willwacher}
\address{Department of Mathematics \\ ETH Zurich \\
R\"amistrasse 101 \\
8092 Zurich, Switzerland}
\email{thomas.willwacher@math.ethz.ch}
\thanks{G.H. has been partially supported by the Agence Nationale pour la recherche, project number ANR-20-CE40-0016 HighAGT. T.W. has been partially supported by the NCCR Swissmap, funded by the Swiss National Science Foundation}
\begin{document}
\title{Automorphisms of framed operads}
\begin{abstract}
Let $\POp$ be an operad acted upon by a group $G$, and let $\QOp=\POp\rtimes G$ be the corresponding framed operad.
We relate the homotopy automorphism groups of $\POp$ and $\QOp$. 
We apply the result to compute the automorphisms of the framed little disks operad.
\end{abstract}

\maketitle

\section{Introduction}

Let $\POp$ be a (topological or simplicial) operad acted upon by a (topological or simplicial) group $G$.
Then the $G$-framed operad $\POp\rtimes G$ is defined such that
\[
    \POp\rtimes G(r)
    =
    \POp(r) \times G^{\times r}
\]
with the composition operations
\begin{equation}  \label{equ:semidirect comp} 
\begin{gathered}
    \circ: \POp\rtimes G(r)\times \POp\rtimes G(s_1)\times \cdots 
    \POp\rtimes G(s_r)\to \POp\rtimes G(s_1+\dots+s_r)
    \\ 
    \left(
    (p,g_1,\dots,g_r)
    \times 
    (q_1,h_{11},\dots,h_{1s_1}),
    \dots,
    (q_r,h_{r1},\dots,h_{rs_r})
    \right)
    =
   (p\circ(g_1\cdot q_1,\dots,g_r\cdot q_r),
   g_1h_{11},\dots,g_1h_{1s_1},
   g_2h_{21},\dots,g_rh_{rs_r}).
\end{gathered}
\end{equation}
A $\POp\rtimes G$-algebra in spaces is the data of a $G$-space $X$ equipped with a $\POp$-algebra structure that is $G$-equivariant in the sense that the structure maps
\[\POp(n)\times X^n\to X\]
are $G$-equivariant, when the source is given its diagonal $G$-action. More generally, this construction is left adjoint to the forgetful functor from operads under $G$ to operads in $G$-spaces (see Lemma \ref{lem:semidirect adj}).

The classical and motivating example of this construction is the little disks operads $\POp=\lD_2$ that is acted upon by the group $G=\SO(2)$, with $\lD_2\rtimes \SO(2)=: \flD_2$ the framed little disks operad and its higher dimensional variants $\flD_n=\lD_n\rtimes\SO(n)$.

Generally, one may hence associate four automorphism spaces to a $G$-operad $\POp$: The (homotopy) automorphism space $\Aut^h_{\Op}(\POp)$ of $\POp$ as an ordinary operad, or as a $\G$-operad $\Aut^h_{G\Op}(\POp)$, or the homotopy automorphisms $\Aut^h_{\Pair}((G,\POp))$ of the pair $(G,\POp)$ consisting of the group $G$ and the operad $\POp$, 
or one may consider the automorphisms of the framed operad $\Aut^h_{\Op}(\POp\rtimes G)$.
The purpose of the present paper is a comparison of these four automorphism spaces. To this end we have the following result.

\begin{thm}\label{thm:main}
    Let $\POp$ be (topological or simplicial) operad acted upon by a (topological or simplicial) group $G$ such that $\POp(0)=*$, $\POp(1)$ is contractible, and let $\QOp=\POp\rtimes G$ be the corresponding framed operad. Then the following holds. 
    \begin{enumerate}
        \item The (derived) morphism of simplicial monoids 
        \[
        \Aut^h_{\Pair}((G,\POp))\to \Aut^h_{\Op}(\QOp)
        \]
        induced by the functoriality of the semi-direct product construction $\rtimes$ is a weak equivalence.
    In particular, there is a homotopy fiber sequence of simplicial monoids 
    \begin{equation}\label{equ:main fib seq}
        \Aut^h_{\GOp}(\POp) \to  \Aut^h_{\Op}(\QOp) \to \Aut^h_{Mon}(G).
    \end{equation}
    \item Let $f:G\to \Aut^h_{\Op}{\POp}$ be the map defining the $G$-action on $\POp$.
Then the classifying space
$B\Aut^h_{\GOp}(\POp)$ is weakly equivalent to the connected component corresponding to $f$ of the unbased mapping space between the classifying spaces of $G$ and $\Aut^h_{\Op}(\POp)$
\[
    B\Aut^h_{G\Op}(\POp)
   \simeq \Map\left(BG, B\Aut^h_{\Op}(\POp)\right)_{Bf}.
\]
    \end{enumerate}
\end{thm}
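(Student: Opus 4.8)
The plan is to prove the two parts more or less independently, with part (1) being the technical heart and part (2) being a reasonably formal consequence of the semidirect product being an operad under $G$ together with a recognition of $B\Aut^h_{G\Op}(\POp)$ as a moduli space of $G$-operad structures on $\POp$.

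For part (1), I would set up an explicit zig-zag of simplicial monoids realizing the derived mapping objects involved. The key point is that a homotopy automorphism of the pair $(G,\POp)$ is the data of an automorphism $\phi$ of $G$ together with a compatible homotopy automorphism of $\POp$ (compatible for the twisted $G$-action via $\phi$), and the semidirect product construction $\rtimes$ is functorial in the pair, landing in operads under $G$, hence a fortiori in operads. I would first show that the comparison map $\Aut^h_{\Pair}((G,\POp)) \to \Aut^h_{\Op}(\QOp)$ is well-defined and a map of simplicial monoids. To prove it is a weak equivalence, I would exhibit an inverse up to homotopy by reconstructing the pair $(G,\POp)$ from $\QOp=\POp\rtimes G$: the hypotheses $\POp(0)=*$ and $\POp(1)\simeq *$ are exactly what is needed, since $\QOp(0)=\POp(0)\times G^{\times 0}=*$ is a point (so it carries no information and does not obstruct), while $\QOp(1)=\POp(1)\times G \simeq G$, which recovers the group $G$ up to homotopy as the "unary part" of $\QOp$; moreover $\QOp(1)$ acting on $\QOp(r)$ by operadic composition at the inputs recovers the diagonal $G$-action, and the sub-object of $\QOp(r)$ lying over the basepoints of all the $G$-factors, i.e. the homotopy fiber of the projection $\QOp(r)\to \QOp(1)^{\times r}\simeq G^{\times r}$, recovers $\POp(r)$ up to homotopy. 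This gives a functor $\QOp \mapsto (\QOp(1), \hofiber)$ in the other direction, and one checks the two composites are naturally weakly equivalent to the identity. The homotopy fiber sequence \eqref{equ:main fib seq} then follows by the same analysis applied to the forgetful map $\Aut^h_{\Pair}((G,\POp))\to \Aut^h_{\Mon}(G)$ remembering only the group automorphism: its homotopy fiber over the identity of $G$ is precisely $\Aut^h_{\GOp}(\POp)$, the automorphisms of $\POp$ fixing the $G$-action.

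For part (2), I would argue that $B\Aut^h_{G\Op}(\POp)$ is the moduli space of $G$-operads weakly equivalent to $\POp$ (the connected component, in the appropriate model structure on $G$-operads, of the point $[\POp]$), while $\Map(BG, B\Aut^h_{\Op}(\POp))_{Bf}$ is the moduli space of "families of operads weakly equivalent to $\POp$ parametrized by $BG$ with monodromy $f$". A $G$-operad with underlying operad weakly equivalent to $\POp$ is the same datum as a morphism of topological (or simplicial) monoids $G\to \Aut^h_{\Op}(\POp)$ in the homotopy category — equivalently, after delooping, a map $BG\to B\Aut^h_{\Op}(\POp)$ — and the component containing the given action $f$ is exactly $\Map(BG,B\Aut^h_{\Op}(\POp))_{Bf}$. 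More precisely, I would use the general principle (Dwyer--Kan / classification of homotopy coherent actions) that for a group-like simplicial monoid $G$ acting on an object $X$ of a model category, the space of such actions with a fixed underlying homotopy type of $X$ is $\Map(BG, B\Aut^h(X))$, and passing to the component of $f$ and taking the fiber over $[\POp]$ identifies $B\Aut^h_{G\Op}(\POp)$ with that mapping space component. Care is needed because $\POp$ is a point in degree $0$ and contractible in degree $1$, but these conditions are preserved under all the weak equivalences in play, so they do not obstruct the identification; they are used (as in part (1)) only to guarantee that the equivalences of $G$-operads can be rigidified.

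The main obstacle I expect is the rigidification in part (1): turning the homotopy-coherent reconstruction of the pair $(G,\POp)$ from $\QOp$ into an honest map of simplicial monoids, and verifying that the two round-trip composites are homotopic \emph{as} maps of simplicial monoids, not merely as maps of spaces. Concretely, one must control the $A_\infty$-structure on $\Aut^h$, which in practice means working with a cofibrant--fibrant model of $\QOp$ (and of $\POp$) so that the derived mapping spaces are computed on the nose, and checking that the functor $\rtimes$ and its homotopy inverse are suitably compatible with these choices; the hypothesis that $\POp(1)$ is contractible is what makes the homotopy fiber $\hofiber(\QOp(r)\to G^{\times r})$ behave well and makes this bookkeeping tractable. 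The remaining steps — identifying the fiber of $\Aut^h_{\Pair}\to \Aut^h_{\Mon}(G)$ and the mapping-space identification in part (2) — are then essentially formal consequences of the general theory of homotopy automorphisms and classifying spaces of (homotopy-coherent) group actions.
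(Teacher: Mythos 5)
Your part (2) is essentially the paper's own argument: the authors isolate exactly the ``general principle'' you invoke as a statement about $\infty$-categories (for grouplike $G$ and $X\in C^{BG}$ one has $B\Aut_{C^{BG}}(X)\simeq \Map(BG,B\Aut_C(X))_f$, proved via $(C^{BG})^{\simeq}\simeq (C^{\simeq})^{BG}$) and then apply it using $(\Op)_\infty^{BG}\simeq (G\Op)_\infty$. That half of your proposal is correct and matches the paper.

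For part (1) there is a genuine gap, and it sits exactly where you yourself flag ``the main obstacle.'' Your plan is to invert $\Aut^h_{\Pair}((G,\POp))\to \Aut^h_{\Op}(\QOp)$ by reconstructing the pair from $\QOp$ via $\QOp(1)$ and the homotopy fibers of $\QOp(r)\to \QOp(1)^{\times r}$, and then to check that both round trips are homotopic as maps of simplicial monoids. You give no mechanism for making $r\mapsto \hofiber\bigl(\QOp(r)\to \QOp(1)^{\times r}\bigr)$ into an operad with a $\QOp(1)$-action functorially in $\QOp$, nor for producing the monoid-level homotopies; these are the actual content of the theorem, not bookkeeping, and deferring them to ``one must control the $A_\infty$-structure'' leaves the proof unexecuted. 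The paper never constructs an inverse. It instead shows that both $\Aut^h_{\Pair}((G,\POp))$ and $\Aut^h_{\Op}(\QOp)$ sit in homotopy fiber sequences over $\Aut^h_{\Mon}(G)$ with fiber $\Aut^h_{\GOp}(\POp)$ and concludes by the five lemma applied to the long exact sequences. Establishing the second fiber sequence is where all the hypotheses enter: restriction to the unary part is a fibration because $\QOp(1)\to\QOp$ is a cofibration for cofibrant $\QOp$ (a cell-complex induction); its fiber $\Map_{\Op^{G/}}(\QOp,\QOp)$ is rewritten as $\Map_{G\Op}(\POp,\QOp)$ by the adjunction $(-)\rtimes G\dashv\iota$; and this is identified with $\Aut^h_{\GOp}(\POp)$ using the fibration $\QOp\rtimes G\to\Com\rtimes G$ together with the contractibility of $\Map^h_{\Op}(\POp,\Com\rtimes G)$ --- the step where $\POp(0)=*$ and $\POp(1)\simeq *$ are really used, via the $\Lambda$/Reedy model structure and the homotopical full faithfulness of $\Op_*\subset\Op$. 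Your stated uses of the hypotheses ($\QOp(0)=*$ ``carries no information,'' $\QOp(1)\simeq G$) do not substitute for this, so the proposal as written stalls at the rigidification step.
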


We apply the above result to the (framed) little disks operad and its rationalization. 
The homotopy automorphisms of $\lD_2$ have been computed by the first author, building on earlier work by Drinfeld \cite{Drinfeld}.
\begin{thm}[{Horel \cite[Theorem 8.5]{HorelProfinite}}]
    \label{thm:aut lD top}
    There is a weak equivalence of simplicial monoids 
    \[
        \Aut^h_{\La\Op}(\lD_2)
        \cong 
        \O(2).
    \]
\end{thm}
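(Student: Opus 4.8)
Rotations and reflections of $\R^2$ preserve round disks and commute with operadic insertion, so the orthogonal group $\O(2)$ acts on $\lD_2$ and there is a natural map of topological monoids $j\colon\O(2)\to\Aut^h_{\La\Op}(\lD_2)$. The plan is to prove that $j$ is a weak equivalence; since both sides are group-like, it suffices to check this on homotopy groups. To get hold of the right-hand side I would resolve $\lD_2$ into its rationalization and its profinite completion by means of a homotopy arithmetic fracture square
\[
\Aut^h_{\La\Op}(\lD_2)\ \simeq\ \Aut^h_{\La\Op}\big(\widehat{\lD}_2\big)\times^h_{\Aut^h_{\La\Op}((\widehat{\lD}_2)_\Q)}\Aut^h_{\La\Op}\big((\lD_2)_\Q\big),
\]
where $(-)_\Q$ denotes rationalization and $\widehat{(-)}$ profinite completion. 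Setting up this square is the first point of difficulty: the arities $\lD_2(n)\simeq\Conf_n(\R^2)$ are aspherical spaces with non-nilpotent fundamental group (the pure braid group $P_n$), so the classical Sullivan arithmetic square does not apply directly. The remedy is that the $P_n$ are \emph{good} in Serre's sense, so that profinite completion computes the étale homotopy type of each $\lD_2(n)$ and is compatible with the operad structure and with the relevant derived mapping spaces — this is precisely what the profinite-operad machinery of \cite{HorelProfinite} supplies.

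Granting the square, its two ``arithmetic'' corners are accessible. On the profinite side, Drinfeld's theorem \cite{Drinfeld}, in operadic form, identifies $\pi_0\Aut^h_{\La\Op}(\widehat{\lD}_2)$ with the profinite Grothendieck--Teichm\"uller group $\widehat{\GT}$, which surjects onto $\widehat{\Z}^\times$ (the cyclotomic character) with geometric kernel $\widehat{\GT}_1$; moreover $\pi_1\Aut^h_{\La\Op}(\widehat{\lD}_2)=\widehat{\Z}$ (the Tate class of natural transformations, realizing the full twist $\widehat{\SO(2)}=B\widehat{\Z}$) and $\pi_{\ge2}=0$. On the rational side, $\lD_2$ is formal (Tamarkin, Kontsevich, Lambrechts--Voli\'{c}), so $(\lD_2)_\Q$ is the rationalized Gerstenhaber operad, and by the graph-complex computation of Fresse--Willwacher the rational homotopy automorphisms are controlled by $\GC_2$: one has $\pi_0\Aut^h_{\La\Op}((\lD_2)_\Q)=\GRT_1(\Q)\rtimes\Q^\times$ (using Willwacher's theorem $H^0(\GC_2)=\grt_1$), while $\pi_1$ is one-dimensional — a class detecting the rotation circle — and $\pi_{\ge2}=0$. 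The remaining corner $\Aut^h_{\La\Op}((\widehat{\lD}_2)_\Q)$ has the analogous shape with $\Q$ replaced by the finite ad\`eles $\widehat{\Z}\otimes\Q$.

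Running the Mayer--Vietoris sequence of the square is the second, and deeper, difficulty: one must show that the ``extra'' rational symmetries $\GRT_1(\Q)$ and the ``extra'' profinite symmetries $\widehat{\GT}_1$ do \emph{not} glue along the adelic corner, hence contribute nothing to $\Aut^h_{\La\Op}(\lD_2)$. What survives is, on $\pi_0$, the scaling symmetries together with $\Q^\times\cap\widehat{\Z}^\times=\{\pm1\}$ inside $(\widehat{\Z}\otimes\Q)^\times$, the class $-1$ being the reflection (it acts by $-1$ on $H_1(\lD_2(2))$ rationally and is complex conjugation profinitely), so $\pi_0\Aut^h_{\La\Op}(\lD_2)=\Z/2$; on $\pi_1$, the fibre product of $\Q\hookrightarrow\widehat{\Z}\otimes\Q$ and $\widehat{\Z}\hookrightarrow\widehat{\Z}\otimes\Q$ is $\Q\cap\widehat{\Z}=\Z$, matching $\pi_1\SO(2)=\Z$; and $\pi_{\ge2}$ vanishes since it does on both corners. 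As $\O(2)$ has exactly these homotopy groups, and $j$ carries the reflection and the generator of $\pi_1\SO(2)$ to the non-trivial classes just found, $j$ is a weak equivalence.

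The main obstacle is the combination of steps (i) and (ii) above: erecting the arithmetic fracture square for \emph{operadic} homotopy-automorphism monoids despite the non-nilpotence of configuration spaces (this rests on the goodness of pure braid groups and the profinite-operad theory of \cite{HorelProfinite}), and then the arithmetic comparison at the adelic corner showing that the de Rham symmetries $\GRT_1(\Q)$ and the arithmetic symmetries $\widehat{\GT}_1$ fail to patch, so the fibre product collapses all the way down to $\O(2)$. The identifications of the two arithmetic corners are taken as input, from Drinfeld's theorem \cite{Drinfeld} and from the graph-complex computations of Fresse--Willwacher and Willwacher.
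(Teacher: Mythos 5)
This statement is not proved in the paper at all: it is imported verbatim as Theorem~8.5 of \cite{HorelProfinite}, and the argument there is quite different from yours. Horel replaces $\lD_2$ by the nerve of the parenthesized braid operad $\pab$ (an operad in groupoids with aspherical, $K(P_n,1)$ arities) and computes its homotopy automorphisms directly: endomorphisms of $\pab$ fixing the objects are classified by pairs $(\lambda,f)\in\Z\times F_2$ subject to Drinfeld's pentagon and hexagon equations, and the only invertible integral solutions are $(\pm 1,1)$; together with the identification of $\pi_1$ with the centre of the pure braid groups and the vanishing of higher homotopy by asphericity, this gives $\pi_0=\Z/2$, $\pi_1=\Z$, $\pi_{\geq 2}=0$, i.e.\ $\O(2)$. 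No completion or fracture argument enters.

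Your proposed route has a genuine gap at its very first step: the arithmetic fracture square you write down for $\Aut^h_{\La\Op}(\lD_2)$ is not established and is not a formal consequence of the goodness of the pure braid groups. Goodness controls the comparison of cohomology under profinite completion; it does not yield a pullback decomposition of an \emph{integral} mapping or automorphism space out of a non-nilpotent, infinite-type object such as $\lD_2$, and the classical Sullivan square for mapping spaces requires nilpotent targets and finiteness hypotheses that fail here. Moreover, even granting the square, the decisive step --- that $\GRT_1(\Q)$ and $\widehat{\GT}_1$ have trivial homotopy fibre product over the adelic corner --- is asserted rather than proved, and unwinding it amounts precisely to showing that the only invertible solutions of the pentagon and hexagon equations over $\Z$ are $(\pm 1,1)$, which is the elementary computation Horel performs directly. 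So the heavy machinery does not buy you anything: the content of the theorem is concentrated in exactly the step your sketch leaves open. (Your descriptions of the two arithmetic corners, $\widehat{\GT}\ltimes\widehat{\SO(2)}$ and $\GRT\ltimes\SO(2)^{\Q}$, are consistent with Theorems~\ref{thm:main flD profinite} and~\ref{thm:aut lD Q}, but they are outputs of independent theorems, not ingredients that assemble into the integral statement by a Mayer--Vietoris argument.)
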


The homotopy automorphism space of the Bousfield-Kan rationalization of the little disks operad has been computed by B. Fresse.
\begin{thm}[{Fresse \cite[Theorem A in part III]{Frbook} }]
    \label{thm:aut lD Q}
There is a weak equivalence of simplicial monoids 
\[
    \Aut^h_{\Op}(\lD_2^{\mathbb{Q}})
    \cong 
    \GRT \ltimes \SO(2)^{\mathbb Q},
\]
with $\GRT$ the Grothendieck-Teichmüller group.
\end{thm}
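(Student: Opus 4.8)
\medskip
\noindent\emph{Sketch of a proof.} The plan is to pass from the topological operad $\lD_2$ to a rational differential graded model and then to compute its homotopy automorphisms by deformation theory. For the first step one invokes B.~Fresse's rational homotopy theory of (topological or simplicial) operads: since $\lD_2$ is a $\Lambda$-operad with $\lD_2(0)=*$ and $\lD_2(1)\simeq *$, the Bousfield--Kan rationalization $\lD_2^{\Q}$ is encoded by a well-behaved rational model, and by the formality of the little $2$-disks operad over $\Q$ (Tamarkin; Kontsevich; Lambrechts--Voli\'c; Fresse--Willwacher) this model is equivalent to Kontsevich's graph operad $\Graphs_2$, whose homology is the Gerstenhaber operad $\Ger=H_*(\lD_2;\Q)$. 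Consequently $\Aut^h_{\Op}(\lD_2^{\Q})$ is a grouplike simplicial monoid whose homotopy type agrees with that of the homotopy automorphisms of the dg model $\Graphs_2$, and in particular becomes accessible through its deformation complex.

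For the second step, recall that the homotopy groups of a component of $\Aut^h_{\Op}(\lD_2^{\Q})$ are the cohomology groups of the deformation $L_\infty$-algebra $\mathfrak g$ governing self-equivalences of a cofibrant resolution of $\Graphs_2$: $H^0(\mathfrak g)$ gives $\pi_0$ (self-equivalences up to homotopy), $H^{-k}(\mathfrak g)$ the group $\pi_k$ of the identity component, and $H^{>0}(\mathfrak g)$ the obstructions to integrating infinitesimal automorphisms. One then uses the identification, due to Willwacher and to Fresse--Willwacher for the operadic version, of $\mathfrak g$ with (a controlled enlargement of) M.~Kontsevich's graph complex $\GC_2$, together with Willwacher's theorem
\[
  H^0(\GC_2)\cong \grt_1,\qquad H^{<0}(\GC_2)=0,
\]
whose proof goes through the comparison of $\GC_2$ with the operad of infinitesimal braids and the existence of rational Drinfeld associators. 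Integrating the pronilpotent part, the class of $\grt_1$ in degree $0$ exponentiates to the pro-unipotent Grothendieck--Teichm\"uller group $\GRT_1$; the one-dimensional arity-one ``rescaling'' class, also in degree $0$, integrates to the multiplicative group $\mathbb G_m$ rescaling the generators of $\Ger$; and the one remaining summand of the deformation complex is one-dimensional and concentrated in degree $-1$, reflecting the classical $\SO(2)$-symmetry of $\lD_2$. Because $H^{<0}(\GC_2)=0$, nothing else sits below degree $0$, so $\pi_{\ge 2}$ vanishes, the identity component is weakly equivalent to $\SO(2)^{\Q}=K(\Q,1)$, and $\pi_0=\GRT=\GRT_1\rtimes\mathbb G_m$.

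It remains to assemble the two Postnikov layers into a semidirect product. The degree-$(-1)$ ``rotation'' class spans an $L_\infty$-ideal at the bottom of a weight filtration on $\mathfrak g$, so $\SO(2)^{\Q}$ occurs as a normal sub-monoid; and a splitting $\GRT\to\Aut^h_{\Op}(\lD_2^{\Q})$ is produced from the tautological action of $\GRT$ on $\Graphs_2$ --- equivalently, from a choice of rational Drinfeld associator, which trivializes the obstruction class in $H^1$. As the only nonvanishing homotopy of $\Aut^h_{\Op}(\lD_2^{\Q})$ lives in degrees $0$ and $1$ and the single extension datum is this semidirect product, the pieces assemble to the asserted weak equivalence of simplicial monoids. (Closer to Fresse's own argument one may avoid the graph complex altogether and work directly with the Malcev-completed operad of parenthesized braids, on which $\GRT$ acts tautologically, together with the bar construction of its Koszul dual.)

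The genuinely hard input is Willwacher's computation $H^0(\GC_2)\cong\grt_1$ and the vanishing of the graph complex in negative degrees: this is a deep theorem in its own right, obtained by a spectral-sequence reduction of $\GC_2$ combined with the Drinfeld-associator comparison, and there is no visible shortcut. The remaining, more bookkeeping-heavy difficulty --- and the reason the present rational answer $\GRT\ltimes\SO(2)^{\Q}$ looks so different from Horel's topological answer $\O(2)$ of Theorem~\ref{thm:aut lD top} (over $\Q$ the pro-unipotent group $\GRT_1$ becomes visible and the reflection $\mathbb Z/2\subset\O(2)$ is absorbed into the scaling torus $\mathbb G_m$) --- lies in developing the rational homotopy theory of topological operads carefully enough that $\Aut^h_{\Op}(\lD_2^{\Q})$ literally coincides with the dg deformation-theoretic object, tracking the distinction between plain operads and $\Lambda$-operads, the arity-$0$ and arity-$1$ terms, and the completeness hypotheses that make the exponential integration well-defined; this is essentially the content of Fresse's book \cite{Frbook}.
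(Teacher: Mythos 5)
The paper does not actually prove this statement: Theorem \ref{thm:aut lD Q} is imported verbatim from Fresse's book \cite{Frbook} and used as a black box, so there is no internal proof to compare yours against. Measured against Fresse's actual argument, your sketch follows a genuinely different route --- the graph-complex/deformation-theoretic one of Willwacher and Fresse--Willwacher \cite{FWAut} --- whereas Fresse models $\lD_2$ by the nerve of the parenthesized braid operad $\pab$, models its rationalization by the Malcev completion (equivalently by the operad of parenthesized chord diagrams), and computes the automorphisms directly from Drinfeld's theory of associators \cite{Drinfeld}.

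The genuine gap in your route is the input you attribute to Willwacher: ``$H^0(\GC_2)\cong\grt_1$ and $H^{<0}(\GC_2)=0$''. What Willwacher proved is $H^0(\GC_2)\cong\grt_1$ together with the vanishing of $H^{-1}(\GC_2)$; the vanishing of $H^{-k}(\GC_2)$ for $k\geq 2$ is not a theorem, and your argument moreover needs the identification of the full deformation complex with $\GC_2$ plus a single one-dimensional class in degree $-1$, which requires care (loop/hair classes, the $\Lambda$-structure, the Hopf structure). Since in your setup $\pi_k$ of the identity component of $\Aut^h_{\Op}(\lD_2^\Q)$ is $H^{-k}$ of the deformation complex, this unproved vanishing is exactly the step that would give $\pi_{\geq 2}=0$, i.e.\ that the identity component is just $\SO(2)^\Q$; so the sketch is open precisely where the homotopy type, rather than $\pi_0$, is determined. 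Fresse's proof gets the $1$-truncatedness for free: $\hpab$ is an operad in $K(\pi,1)$'s, so every derived operadic mapping space into it is a homotopy limit of $1$-types and hence a $1$-type; the remaining content is only $\pi_0\cong\GRT$ (Drinfeld's associator theory) and $\pi_1\cong\Q$, generated by the full twist in the center of the completed pure braid groups. You do acknowledge the braid route in your closing parenthesis; either promote it to the main line of argument, or supply a reference or proof for the negative-degree vanishing your graph-complex route requires.
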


Using Theorem \ref{thm:main} above, we then obtain the following.

\begin{thm}
    \label{thm:main flD top}
    There are weak equivalences of simplicial monoids 
    \begin{align*}
        \Aut^h_{\SO(2)\Op}(\lD_2) & \cong \SO(2) \\
        \Aut^h_{\Op}(\flD_2) &\cong \O(2).
    \end{align*}
\end{thm}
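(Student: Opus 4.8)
The plan is to deduce Theorem~\ref{thm:main flD top} by feeding the known computations of Theorems~\ref{thm:aut lD top} and~\ref{thm:aut lD Q} into the general machinery of Theorem~\ref{thm:main}, applied to the operad $\POp=\lD_2$ with $G=\SO(2)$. First I would observe that $\lD_2$ satisfies the hypotheses of Theorem~\ref{thm:main}: $\lD_2(0)=*$ and $\lD_2(1)\simeq *$ (the space of one little disk in the unit disk is contractible), so $\QOp=\lD_2\rtimes\SO(2)=\flD_2$ is covered by the theorem. Then part~(1) of Theorem~\ref{thm:main} gives a homotopy fiber sequence of simplicial monoids
\begin{equation*}
    \Aut^h_{\SO(2)\Op}(\lD_2) \to \Aut^h_{\Op}(\flD_2) \to \Aut^h_{Mon}(\SO(2)),
\end{equation*}
and it identifies the total space $\Aut^h_{\Op}(\flD_2)$ with $\Aut^h_{\Pair}((\SO(2),\lD_2))$. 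So the task reduces to computing the base and the fiber of this sequence.

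For the fiber, I would apply part~(2) of Theorem~\ref{thm:main}: $B\Aut^h_{\SO(2)\Op}(\lD_2)\simeq \Map(B\SO(2), B\Aut^h_{\Op}(\lD_2))_{Bf}$. Here one must be careful about whether to use the plain operadic or the $\Lambda$-operadic automorphisms; since $\lD_2(0)=*$, the little disks operad is naturally a $\Lambda$-operad and the relevant notion is $\Aut^h_{\La\Op}(\lD_2)\simeq \O(2)$ by Theorem~\ref{thm:aut lD top}. Thus $B\Aut^h_{\Op}(\lD_2)\simeq B\O(2)$, and $B\SO(2)=\C P^\infty$, so I need the mapping space $\Map(\C P^\infty, B\O(2))$. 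Using the fibration $B\SO(2)\to B\O(2)\to B(\Z/2)=\R P^\infty$, and the fact that $[\C P^\infty,\R P^\infty]=H^1(\C P^\infty;\Z/2)=0$, every map $\C P^\infty\to B\O(2)$ lifts (up to homotopy) to $B\SO(2)$; the component we want is the one of $Bf$, where $f\colon\SO(2)\hookrightarrow\O(2)$ is the inclusion, so $Bf\colon B\SO(2)\to B\O(2)$. Computing $\Map(\C P^\infty,B\SO(2))$ via $[\C P^\infty,\C P^\infty]$ and the fact that self-maps of $\C P^\infty$ are classified by $H^2(\C P^\infty;\Z)\cong\Z$ with the identity component contractible (since $\C P^\infty=K(\Z,2)$, so $\Map(\C P^\infty,\C P^\infty)_{\id}\simeq\Map_*(\C P^\infty,\C P^\infty)_{\id}\times\C P^\infty$ up to the usual evaluation fibration, and the based component is $\Map_*(K(\Z,2),K(\Z,2))_{\id}$ which is a point since $H^{n}(K(\Z,2);\Z)$ contains no $\Z$ in degrees below... one checks the relevant homotopy groups vanish) shows $B\Aut^h_{\SO(2)\Op}(\lD_2)\simeq B\SO(2)$, hence $\Aut^h_{\SO(2)\Op}(\lD_2)\simeq\SO(2)$. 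I expect this homotopy-theoretic identification of the mapping space component to be the main obstacle, since one has to correctly handle the $\O(2)$ versus $\SO(2)$ discrepancy and the evaluation fibration, and pin down exactly which component $Bf$ lives in.

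Finally, for the total space: with the fiber now known to be $\SO(2)$ and $\Aut^h_{Mon}(\SO(2))$ as the base, one has the fiber sequence $\SO(2)\to\Aut^h_{\Op}(\flD_2)\to\Aut^h_{Mon}(\SO(2))$. The homotopy automorphisms of $\SO(2)\simeq S^1$ as a topological monoid form a space equivalent to $\Z/2$ (a circle has essentially only the identity and the inversion as self-equivalences of $H$-spaces, up to homotopy), so the base is $\Z/2$. This gives a short exact sequence of groups on $\pi_0$ and shows $\Aut^h_{\Op}(\flD_2)$ is an extension of $\Z/2$ by $\SO(2)$; to see it is $\O(2)$ rather than $\SO(2)\times\Z/2$ (or $\SO(2)^{\Q}\rtimes\Z/2$), I would invoke the honest geometric automorphisms of $\flD_2$: complex conjugation on the plane induces an automorphism of $\flD_2$ whose image in $\Aut^h_{Mon}(\SO(2))$ is the nontrivial inversion, exhibiting the semidirect-product structure $\O(2)=\SO(2)\rtimes\Z/2$. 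Since this geometric automorphism together with the standard $\SO(2)$-rotation action furnishes a map $\O(2)\to\Aut^h_{\Op}(\flD_2)$ compatible with the fiber sequence, the five lemma on homotopy groups finishes the identification $\Aut^h_{\Op}(\flD_2)\simeq\O(2)$. The rational statement, if desired, follows identically by replacing Theorem~\ref{thm:aut lD top} with Theorem~\ref{thm:aut lD Q}, giving $\Aut^h_{\Op}(\flD_2^{\Q})\simeq\GRT\ltimes\O(2)^{\Q}$.
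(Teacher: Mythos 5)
Your proof of the stated theorem is correct and follows essentially the same route as the paper: part (1) of Theorem~\ref{thm:main} gives the fiber sequence, part (2) plus Theorem~\ref{thm:aut lD top} identifies the fiber via $\Map(B\SO(2),B\O(2))_{Bf}\simeq\Map(\C P^\infty,\C P^\infty)_{\id}\simeq \C P^\infty$, the base is $\Aut^h_{\Mon}(\SO(2))\simeq\Z/2$ by the same $K(\Z,2)$ computation, and the extension is resolved by mapping the geometric $\O(2)$-action on $\flD_2$ into the fiber sequence. Only your closing aside is off: the rational case does not follow ``identically'' (the base becomes $\Q^\times$ rather than $\Z/2$, and identifying the relevant component of $\Map(B\SO(2)^\Q,B(\GRT\ltimes\SO(2)^\Q))$ requires the Maurer--Cartan/dg Lie algebra computations of the paper's separate proof of Theorem~\ref{thm:main flD Q}), and the answer there is $\GRT\ltimes\SO(2)^\Q$ with $\GRT=\Q^\times\ltimes\GRT_1$, not $\GRT\ltimes\O(2)^\Q$.
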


\begin{thm}\label{thm:main flD Q}
    There are weak equivalences of simplicial monoids 
    \begin{align*}
        \Aut^h_{\SO(2)^{\Q}\Op}(\lD_2^\Q) & \cong \GRT_1 \ltimes \SO(2)^\Q \\
        \Aut^h_{\Op}(\flD_2^{\Q}) &
        \cong 
        \GRT \ltimes \SO(2)^{\mathbb Q}.
    \end{align*}
    with $\GRT_1$ the pro-unipotent Grothendieck-Teichmüller group and $\GRT=\Q^\times \ltimes \GRT_1$, see \cite{BarNatan,Drinfeld}.
\end{thm}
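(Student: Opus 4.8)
The plan is to derive Theorem~\ref{thm:main flD Q} by combining the general comparison result of Theorem~\ref{thm:main} with Fresse's computation of $\Aut^h_{\Op}(\lD_2^{\mathbb Q})$ (Theorem~\ref{thm:aut lD Q}), exactly as Theorem~\ref{thm:main flD top} was deduced from Horel's Theorem~\ref{thm:aut lD top}. The operad $\lD_2$ satisfies the hypotheses of Theorem~\ref{thm:main} ($\lD_2(0)=*$ and $\lD_2(1)\simeq *$), and these properties are inherited by the rational model $\lD_2^{\mathbb Q}$, so we may apply part (1) to the $\SO(2)^{\mathbb Q}$-operad $\lD_2^{\mathbb Q}$ to get a homotopy fiber sequence
\[
\Aut^h_{\SO(2)^{\mathbb Q}\Op}(\lD_2^{\mathbb Q})
\to
\Aut^h_{\Op}(\flD_2^{\mathbb Q})
\to
\Aut^h_{Mon}(\SO(2)^{\mathbb Q}),
\]
and identify $\Aut^h_{\Op}(\flD_2^{\mathbb Q})$ with $\Aut^h_{\Pair}((\SO(2)^{\mathbb Q},\lD_2^{\mathbb Q}))$.

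First I would identify the base term: $\SO(2)^{\mathbb Q}$ is a rational $K(\mathbb Q,1)$, i.e.\ it is equivalent to the abelian group $\mathbb Q$ (as a topological/simplicial group, after rationalization of the circle), and its monoid of homotopy automorphisms is $\mathbb Q^\times$ (acting by scaling the generator), which is discrete. Next I would compute the fiber $\Aut^h_{\SO(2)^{\mathbb Q}\Op}(\lD_2^{\mathbb Q})$ using part (2) of Theorem~\ref{thm:main}: $B\Aut^h_{\SO(2)^{\mathbb Q}\Op}(\lD_2^{\mathbb Q})$ is the component at $Bf$ of $\Map(B\SO(2)^{\mathbb Q}, B\Aut^h_{\Op}(\lD_2^{\mathbb Q}))$. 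Here $B\SO(2)^{\mathbb Q}\simeq K(\mathbb Q,2)=\mathbb{CP}^\infty_{\mathbb Q}$ and, by Fresse's theorem, $B\Aut^h_{\Op}(\lD_2^{\mathbb Q})\simeq B(\GRT\ltimes\SO(2)^{\mathbb Q})$. Since $\GRT_1$ is pro-unipotent and $\GRT=\mathbb Q^\times\ltimes\GRT_1$, analyzing this mapping space via the fibration $B\SO(2)^{\mathbb Q}\to B(\GRT\ltimes\SO(2)^{\mathbb Q})\to B\GRT$ should show that a map $\mathbb{CP}^\infty_{\mathbb Q}\to B(\GRT\ltimes\SO(2)^{\mathbb Q})$ lifting the identity class $Bf$ is, up to homotopy, determined by the canonical inclusion $B\SO(2)^{\mathbb Q}\to B(\GRT\ltimes\SO(2)^{\mathbb Q})$ twisted by $\GRT_1$, and the relevant component has the homotopy type of $B(\GRT_1\ltimes\SO(2)^{\mathbb Q})$; hence $\Aut^h_{\SO(2)^{\mathbb Q}\Op}(\lD_2^{\mathbb Q})\simeq\GRT_1\ltimes\SO(2)^{\mathbb Q}$, which is the first displayed equivalence.

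Finally, for the second equivalence, I would feed the two computed ends into the fiber sequence above. The sequence reads $\GRT_1\ltimes\SO(2)^{\mathbb Q}\to\Aut^h_{\Op}(\flD_2^{\mathbb Q})\to\mathbb Q^\times$, with the base discrete; the extension is classified by the action of $\mathbb Q^\times$ on $\GRT_1\ltimes\SO(2)^{\mathbb Q}$ coming from the $\GRT=\mathbb Q^\times\ltimes\GRT_1$ structure and the scaling action of $\mathbb Q^\times$ on $\SO(2)^{\mathbb Q}$. Matching this with the semidirect product structure of $\GRT$ gives $\Aut^h_{\Op}(\flD_2^{\mathbb Q})\simeq(\mathbb Q^\times\ltimes\GRT_1)\ltimes\SO(2)^{\mathbb Q}=\GRT\ltimes\SO(2)^{\mathbb Q}$. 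Concretely one checks that the splitting $\mathbb Q^\times\to\Aut^h_{\Op}(\flD_2^{\mathbb Q})$ exists (coming, e.g., from the rescaling automorphisms of the framed little disks, compatibly with Fresse's splitting) and that it assembles the pieces into the claimed iterated semidirect product.

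The main obstacle I expect is the middle step: pinning down the homotopy type of the component $\Map(B\SO(2)^{\mathbb Q},B(\GRT\ltimes\SO(2)^{\mathbb Q}))_{Bf}$, i.e.\ computing the space of lifts in the fibration over $B\GRT$ and verifying that obstruction/lifting arguments over the pro-unipotent group $\GRT_1$ (where $B\GRT_1$ has only rational cohomology in positive degrees and a nontrivial action on $\SO(2)^{\mathbb Q}$) collapse to give precisely $B(\GRT_1\ltimes\SO(2)^{\mathbb Q})$ rather than something with extra higher homotopy. This requires care because $\GRT_1$ is a pro-object and the mapping space out of the infinite-dimensional $\mathbb{CP}^\infty_{\mathbb Q}$ must be controlled; the payoff is that, once this is settled, the rest is bookkeeping with semidirect products and the discrete base $\mathbb Q^\times$.
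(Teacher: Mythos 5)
Your overall strategy is exactly the paper's: apply part (1) of Theorem \ref{thm:main} to get the fiber sequence over $\Aut^h_{\Mon}(\SO(2)^\Q)\simeq\Q^\times$, compute the fiber via part (2) together with Fresse's Theorem \ref{thm:aut lD Q}, and resolve the extension by exhibiting an action of the full group $\GRT\ltimes\SO(2)^\Q$ on $\flD_2^\Q$ (the paper cites the ribbon-braid construction of \cite{BHR} and \cite{Brun} for this, which then gives surjectivity on $\pi_0$ of the last map and lets the long exact sequence finish the job). The identification of the base and the endgame are fine.

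The genuine gap is the step you yourself flag: you assert, but do not prove, that the component $\Map\bigl(B\SO(2)^\Q, B(\GRT\ltimes\SO(2)^\Q)\bigr)_{Bf}$ is $B(\GRT_1\ltimes\SO(2)^\Q)$ ``rather than something with extra higher homotopy,'' and this is precisely the technical heart of the proof. The paper settles it not by obstruction theory over $B\GRT$ but by dg Lie algebra models of mapping spaces: writing $B\SO(2)^\Q=\MC_\bullet(\fg)$ with $\fg=\Q e$ abelian in degree $-1$, using Berglund's results to identify $\Omega(\MC_\bullet(\fg))$ with the Chevalley--Eilenberg algebra $C(\fg^*)=\Q[u]$, and thereby reducing the mapping space to $\MC_\bullet\bigl((\grt_1\oplus\fg)\hotimes\Q[u]\bigr)$; the relevant basepoint is the Maurer--Cartan element $\alpha=u\otimes e$, which is \emph{central} and hence produces no differential upon twisting, so the truncation of the twisted Lie algebra is just $\grt_1\oplus\fg$ and the component is $B(\GRT_1\times\SO(2)^\Q)$ on the nose (which equals $B(\GRT_1\ltimes\SO(2)^\Q)$ since $\GRT_1$ acts trivially on $\SO(2)^\Q$). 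If you prefer your route via the fibration $B\SO(2)^\Q\to B(\GRT\ltimes\SO(2)^\Q)\to B\GRT$, you would need to make explicit that $\GRT_1$ and $\Q^\times$ are discrete, so $B\GRT$ is a $1$-type and $\Map((\C P^\infty)^\Q,B\GRT)$ is homotopy discrete, and then compute the space of lifts, which amounts to $\Map\bigl((\C P^\infty)^\Q,(\C P^\infty)^\Q\bigr)_{\id}\simeq K(\Q,2)$; either way, some concrete computation replacing the phrase ``should show'' is required before the proof is complete.
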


There is also a version of this theorem for the profinite completion of $\flD_2$ recovering the main result of \cite{BHR} and giving a computation of the fundamental group of the group of homotopy automorphisms of $\widehat{\flD}_2$ which was not done in \cite{BHR}. In order to phrase this result we have to address the technicality that the profinite completion functor does not preserve products in general. The way to deal with this, introduced in \cite{HorelProfinite}, is to use the category of weak operads (denoted $\WOp$). This category is the category of functors from the algebraic theory of operads that preserve products up to homotopy. Our theorem in this setting gives the following.

\begin{thm}
    \label{thm:main flD profinite}
    There are weak equivalences of simplicial monoids 
    \begin{align*}
        \Aut^h_{|\widehat{\SO(2)}|\WOp}(\widehat{\lD}_2) & \cong \widehat{\GT}_1 \ltimes \widehat{\SO(2)} \\
        \Aut^h_{\WOp}(\widehat{\flD}_2) &
        \cong 
        \widehat{\GT} \ltimes \widehat{\SO(2)}.
    \end{align*}
     with $\widehat{\GT}$ the pro-finite Grothendieck-Teichmüller group and $\widehat{\GT}_1$, the kernel of the cyclotomic character $\widehat{\GT}\to\widehat{\Z}^\times$. See \cite{Drinfeld}.
\end{thm}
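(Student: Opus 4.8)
The strategy is to carry out the proofs of Theorems~\ref{thm:main flD top} and~\ref{thm:main flD Q} inside the category $\WOp$ of weak operads, which is forced on us because profinite completion does not commute with products on the nose. Two ingredients are needed. The first is the weak-operad version of Theorem~\ref{thm:main}. It applies to the pair $\big(|\widehat{\SO(2)}|,\widehat{\lD}_2\big)$: profinite completion sends the point to the point and a contractible simplicial set to a contractible one, so $\widehat{\lD}_2(0)=*$ and $\widehat{\lD}_2(1)$ is contractible; and, viewed as a functor into $\WOp$, profinite completion is compatible with the semidirect product (thanks to the homotopy-coherent products defining $\WOp$, as in~\cite{HorelProfinite}), so $\widehat{\flD}_2\simeq\widehat{\lD}_2\rtimes|\widehat{\SO(2)}|$ in $\WOp$. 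The second ingredient is the profinite analogue of Theorems~\ref{thm:aut lD top} and~\ref{thm:aut lD Q} (see~\cite{HorelProfinite,BHR}): there is a weak equivalence of simplicial monoids $\Aut^h_{\WOp}(\widehat{\lD}_2)\simeq\widehat{\GT}\ltimes\widehat{\SO(2)}$, in which $\widehat{\GT}$ acts on $\widehat{\SO(2)}$ through the cyclotomic character $\chi\colon\widehat{\GT}\to\widehat{\Z}^\times$, and under which the structure map $f\colon|\widehat{\SO(2)}|\to\Aut^h_{\WOp}(\widehat{\lD}_2)$ is the inclusion of the $\widehat{\SO(2)}$-factor.

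For the first equivalence we feed this into part~(2) of the weak-operad Theorem~\ref{thm:main}:
\[
  B\Aut^h_{|\widehat{\SO(2)}|\WOp}(\widehat{\lD}_2)\;\simeq\;\Map\big(B|\widehat{\SO(2)}|,\;B(\widehat{\GT}\ltimes\widehat{\SO(2)})\big)_{Bf}.
\]
Now $B|\widehat{\SO(2)}|$ is the profinite Eilenberg--MacLane space $K(\widehat{\Z},2)$, and $B(\widehat{\GT}\ltimes\widehat{\SO(2)})$ is the total space of its universal-cover fibration $K(\widehat{\Z},2)\to B(\widehat{\GT}\ltimes\widehat{\SO(2)})\to K(\widehat{\GT},1)$, the monodromy being the $\chi$-action on $\pi_2=\widehat{\Z}$. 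Mapping the simply connected space $K(\widehat{\Z},2)$ into this fibration, the base becomes $\Map(K(\widehat{\Z},2),K(\widehat{\GT},1))\simeq K(\widehat{\GT},1)$ and the fibre becomes $\Map(K(\widehat{\Z},2),K(\widehat{\Z},2))$, whose components are indexed by the ``degree'' in $H^2(K(\widehat{\Z},2);\widehat{\Z})=\widehat{\Z}$ and are each weakly equivalent to $K(\widehat{\Z},2)$; the map $Bf$ corresponds to the degree-one component, and the $\widehat{\GT}$-monodromy permutes degrees through $\chi$, so the orbit of that component is $\widehat{\Z}^\times$ with stabiliser $\widehat{\GT}_1=\ker\chi$. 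Hence $\Aut^h_{|\widehat{\SO(2)}|\WOp}(\widehat{\lD}_2)$ has $\pi_0=\widehat{\GT}_1$ and $\pi_1=\widehat{\Z}$ and no higher homotopy, and the forgetful map to $\Aut^h_{\WOp}(\widehat{\lD}_2)=\widehat{\GT}\ltimes\widehat{\SO(2)}$ realises $\pi_0$ as $\ker\chi\subset\widehat{\GT}$ and is an isomorphism on $\pi_1$. To upgrade this to an equivalence of simplicial monoids, equivalently to identify the fibration above with $B(\widehat{\GT}_1\ltimes\widehat{\SO(2)})$ without computing its (a priori nonzero) $k$-invariant in $H^3(\widehat{\GT}_1;\widehat{\Z})$, we exhibit a comparison morphism $\widehat{\GT}_1\ltimes\widehat{\SO(2)}\to\Aut^h_{|\widehat{\SO(2)}|\WOp}(\widehat{\lD}_2)$: the $\widehat{\SO(2)}$-factor acts by rotations, while $\widehat{\GT}_1=\ker\chi\subset\widehat{\GT}\subset\widehat{\GT}\ltimes\widehat{\SO(2)}=\Aut^h_{\WOp}(\widehat{\lD}_2)$ centralises the rotation subgroup and so preserves the $|\widehat{\SO(2)}|$-operad structure. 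By the computation just made this morphism is an isomorphism on $\pi_0$ and $\pi_1$, hence a weak equivalence.

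For the second equivalence we invoke part~(1) of the weak-operad Theorem~\ref{thm:main}, which gives a homotopy fibre sequence of simplicial monoids
\[
  \Aut^h_{|\widehat{\SO(2)}|\WOp}(\widehat{\lD}_2)\;\to\;\Aut^h_{\WOp}(\widehat{\flD}_2)\;\to\;\Aut^h_{Mon}\big(|\widehat{\SO(2)}|\big).
\]
The base is the derived automorphism monoid of the profinite circle; since $|\widehat{\SO(2)}|\simeq\Omega K(\widehat{\Z},2)$ as a grouplike monoid it is the space of pointed self-equivalences of $K(\widehat{\Z},2)$, which is homotopically discrete and equal to $\Aut(\widehat{\Z})=\widehat{\Z}^\times$. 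Combined with the first equivalence this reads $\widehat{\GT}_1\ltimes\widehat{\SO(2)}\to\Aut^h_{\WOp}(\widehat{\flD}_2)\to\widehat{\Z}^\times$, and we compare it with $\widehat{\GT}_1\ltimes\widehat{\SO(2)}\to\widehat{\GT}\ltimes\widehat{\SO(2)}\to\widehat{\Z}^\times$ coming from $\widehat{\GT}=\widehat{\Z}^\times\ltimes\widehat{\GT}_1$. A map of fibre sequences is produced by the functoriality of $\rtimes$: every automorphism $\alpha$ of $\widehat{\lD}_2$ normalises the rotation subgroup, acting on it through the image of $\alpha$ in $\widehat{\GT}$ followed by $\chi$, so $\alpha$ together with that automorphism of $|\widehat{\SO(2)}|$ is an automorphism of the pair, giving a morphism $\widehat{\GT}\ltimes\widehat{\SO(2)}=\Aut^h_{\WOp}(\widehat{\lD}_2)\to\Aut^h_{\Pair}\big((|\widehat{\SO(2)}|,\widehat{\lD}_2)\big)\simeq\Aut^h_{\WOp}(\widehat{\flD}_2)$ (the last equivalence by part~(1)). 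On the base this morphism is the identity of $\widehat{\Z}^\times$ and on the fibre it is the comparison morphism of the previous paragraph, hence an equivalence; by the five lemma it is a weak equivalence. In particular $\pi_1\Aut^h_{\WOp}(\widehat{\flD}_2)=\pi_1\widehat{\SO(2)}=\widehat{\Z}$, which is the computation of the fundamental group absent from~\cite{BHR}.

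The bulk of the work lies in the first ingredient --- establishing Theorem~\ref{thm:main} for weak operads and, above all, checking that profinite completion intertwines the semidirect product constructions on simplicial operads and on weak operads up to coherent homotopy. Within the deduction the only delicate points are homotopy-coherence bookkeeping: the identification of $f$ with the circle inclusion and the replacement of $k$-invariant computations by comparison morphisms, both handled exactly as in the proof of Theorem~\ref{thm:main flD Q}.
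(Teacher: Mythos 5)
Your overall skeleton (apply part (2) of Theorem \ref{thm:main} to compute $\Aut^h_{|\widehat{\SO(2)}|}(\widehat{\lD}_2)$ as a component of a mapping space of classifying spaces, then part (1) to get the fibre sequence over $\widehat{\Z}^\times$, and resolve the extension/$k$-invariant problem by exhibiting an explicit action of $\widehat{\GT}\ltimes\widehat{\SO(2)}$ rather than by computing obstructions) matches the paper's deduction step for step, including the identification of the base $\Aut^h_{\Mon}$ with the discrete group $\widehat{\Z}^\times$ and of the relevant mapping-space component with $B\widehat{\GT}_1\times B^2\widehat{\Z}$. But the two ingredients you explicitly defer --- ``establishing Theorem \ref{thm:main} for weak operads'' and ``checking that profinite completion intertwines the semidirect product constructions \ldots up to coherent homotopy,'' i.e.\ that $\widehat{\flD}_2\simeq\widehat{\lD}_2\rtimes|\widehat{\SO(2)}|$ in $\WOp$ --- are precisely the hard part, and as stated they constitute a genuine gap. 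The semidirect product $\rtimes$ and the entire proof of Theorem \ref{thm:main} (the transferred model structures, the adjunction of Lemma \ref{lem:semidirect adj}, the fibration Lemmas \ref{lem:unary fib} and \ref{lem:QG ComG fib}) are built on strict products of simplicial sets; in $\WOp$ products are only preserved up to homotopy, so it is not even clear what $\widehat{\lD}_2\rtimes|\widehat{\SO(2)}|$ means, let alone that the fibration lemmas survive. You cannot simply ``carry out the proofs inside $\WOp$.''

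The paper avoids this entirely by a change of model that you are missing: it works with the parenthesized ribbon braid operad, an operad in \emph{groupoids}, for which $\parb\cong\pab\rtimes\B\Z$ is a strict isomorphism. Profinite completion of groupoids followed by the right adjoint $|-|$ and the nerve preserves the relevant products, so $N|\hparb|\cong N|\hpab|\rtimes B|\widehat{\Z}|$ is an honest semidirect product of honest simplicial operads, to which Theorem \ref{thm:main} applies verbatim; the only place weak operads enter is at the very end, to identify $N|\hparb|$ with a fibrant replacement of $\widehat{\flD}_2$ in $\WOp$ via \cite[Lemma 8.3]{BHR} and the fibrancy of the profinite groupoids $\hparb(n)$. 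If you want to salvage your route you must either develop the semidirect product and the fibration lemmas for weak operads from scratch, or (much more economically) insert the groupoid model as the paper does. One further small caution: your ``second ingredient'' should be sourced as the computation of $\Aut^h_{\Op}(N|\hpab|)\simeq\widehat{\GT}\ltimes B|\widehat{\Z}|$ from \cite{HorelProfinite}, again a statement about the groupoid model rather than about $\widehat{\lD}_2$ directly.
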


We emphasize that the version of the little disks operad that we use has an operation in arity zero, i.e., $\lD(0)=*$. Composition with  this element is the same as forgetting disks from a configuration. However, there are also similar results for the non-unital version (see Theorems \ref{thm:non-unital thm} and \ref{thm:non-unital thm profinite}).

\section{Model categories, functors and adjunctions}

\subsection{Model category structures}
Fix a simplicial group $G$.
We consider the following categories: 
\begin{itemize}
\item The category $\sSet$ of simplicial sets and $\Seq$ of symmetric sequences in simplicial sets.
\item The category of simplicial operads $\Op$. Our operads may have nullary operations. 
\item The category $\Mon$ of monoids in simplicial sets.
This can also be understood as the subcategory $\Mon\subset \Op$ of operads with only unary operations, by considering a monoid $M$ as an operad such that 
\begin{equation}\label{equ:Mon Op incl}
M(r) 
=
\begin{cases}
    M & \text{for $r=1$} \\
    \emptyset & \text{otherwise}
\end{cases} .
\end{equation}
This also allows us to consider the under-category $\OpG$.
\item The category of operads with a $G$-action $\GOp$.
\item The category $\Pair$ of pairs $(G,\POp)$ consisting of a simplicial group $G$ and a $G$-operad $\POp$. The morphisms $(G,\POp)\to (H,\QOp)$ are pairs $(\phi, F)$ consisting of a morphism of simplicial groups $\phi:G\to H$ and a morphism of $G$-operads $F:\POp\to \phi^*\QOp$.
\end{itemize}
    
We equip $\sSet$ with the standard Quillen model structure, and the other categories above with cofibrantly generated model structures by transfer along the forgetful functors 
\begin{align*}
    &\GOp \to \Op \to \Seq \to \prod_{r\geq 0} \sSet
    &
    &\Pair\to \sSet \times \prod_{r\geq 0} \sSet.
\end{align*}
Concretely, this means that in each case the weak equivalences (resp. fibrations) are arity and/or objectwise weak equivalences (resp. fibrations) of simplicial sets. The cofibrations are those morphisms that have the left-lifting property with respect to acyclic fibrations.
The generating (acyclic) cofibrations are the images of the generating cofibrations in $\sSet$ under the respective free object functors.

\begin{prop}[Berger-Moerdijk]
The above classes of distinguished morphisms define cofibrantly generated model category structures on the categories $\Op$, $\Mon$, $G\Op$, $\Pair$.
\end{prop}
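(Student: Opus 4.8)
The plan is to obtain all four model structures by \emph{transfer} of the standard Quillen model structure on $\sSet$ along the indicated forgetful functors, using the classical transfer theorem for cofibrantly generated model categories. Recall that $\sSet$ is cofibrantly generated with generating cofibrations $I=\{\partial\Delta^n\hookrightarrow\Delta^n\}_{n\geq 0}$ and generating acyclic cofibrations $J=\{\Lambda^n_k\hookrightarrow\Delta^n\}$, so the product categories $\prod_{r\geq 0}\sSet$ and $\sSet\times\prod_{r\geq 0}\sSet$ carry product model structures that are again cofibrantly generated, with generators the coproduct-wise images of $I$ and $J$ in the several factors. In each of the four cases the forgetful functor $U$ to such a product admits a left adjoint $\Free$ — the free operad, the free $G$-operad, the free monoid, resp.\ the free pair functor — given by the usual sums over decorated trees (together with the free group construction on the distinguished coordinate for $\Pair$); writing these adjunctions out explicitly is the first step.

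The transfer theorem then reduces the claim to three points: (a) the source categories are bicomplete; (b) the domains of $\Free(I)$ and $\Free(J)$ are small, so that the small object argument is available; and (c) the \emph{acyclicity condition}, that every transfinite composition of pushouts of maps in $\Free(J)$ is carried by $U$ to a weak equivalence of (products of) simplicial sets. Points (a) and (b) are formal: each of $\Op$, $\Mon$, $\GOp$, $\Pair$ is the category of algebras over a finitary monad on the relevant product of copies of $\sSet$ — the monad is assembled from the free construction, which involves only finite trees, plus the finitary free-group monad on the distinguished coordinate for $\Pair$ — hence these categories are locally finitely presentable, so in particular bicomplete, every object is small, and $U$ preserves filtered colimits. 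Moreover $\Mon$ is simply the full subcategory of $\Op$ on operads concentrated in arity $1$ with the induced structure, so its case follows from that of $\Op$.

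The substance is (c), where the symmetric group actions present in $\Op$, $\GOp$ and $\Pair$ prevent one from computing pushouts along free maps on the level of underlying simplicial sets. I would handle this following Berger and Moerdijk: $\sSet$ is a cartesian closed model category in which every object is cofibrant, the interval $\Delta^1$ is a cocommutative coalgebra (the comultiplication being the diagonal $\Delta^1\to\Delta^1\times\Delta^1$, with $\partial\Delta^1=\Delta^0\sqcup\Delta^0\hookrightarrow\Delta^1$ a cofibration and $\Delta^1\to\Delta^0$ a weak equivalence), and $\mathrm{Ex}^\infty$ is a fibrant replacement functor preserving finite products (indeed $\mathrm{Ex}$ is right adjoint to barycentric subdivision, hence preserves products, and finite products commute with the defining filtered colimit). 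These are exactly the hypotheses making the symmetric operad monad — and, because the category of $G$-objects with diagonal action enjoys the same features, likewise the monads defining $\GOp$ and $\Pair$ — \emph{admissible}: cotensoring a fibrant algebra with the coalgebra interval $\Delta^1$ yields a functorial path object whose underlying map is a fibration of simplicial sets, and combined with the product-preserving fibrant replacement this gives the acyclicity condition by Quillen's path-object argument (in the $\Pair$ case one also invokes the classical, symmetric-group-free transfer for monoids on the group coordinate). The resulting model structures are cofibrantly generated with generating (acyclic) cofibrations the images under $\Free$ of $I$ (resp.\ $J$), as asserted. The one genuinely non-formal ingredient, and hence the main obstacle in a complete proof, is this admissibility verification; everything else is bookkeeping.
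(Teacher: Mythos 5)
Your proposal is correct and follows essentially the same route as the paper, which simply cites Berger--Moerdijk (\cite{BMAxiomatic}, Theorem 3.2, resp.\ the colored-operad version in \cite{BMColored}) and notes that the remaining cases are identical; what you have done is unpack the hypotheses of that transfer theorem (smallness, and admissibility via the coalgebra interval and the product-preserving fibrant replacement $\mathrm{Ex}^\infty$) rather than take a different approach.
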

\begin{proof}
For the case of $\Op$ this is \cite[Theorem 3.2]{BMAxiomatic} (see also section 3.3.1 in that paper). For the other cases the proof is identical, one just replaces "operad" by monoid, $G$-operad or pair.
Alternatively, the proposition is a special case of \cite[Theorem 2.1]{BMColored}, since the above types of algebraic objects are all algebras over suitable colored operads.
\end{proof}

We call the resulting model category structures the projective model category structures. 
The under-category $\OpG$ can then simply be equipped with the slice model structure. This means that a morphism is a weak equivalence (resp. fibration, cofibration) iff it is a weak equivalence (resp. fibration, cofibration) in the underlying category $\Op$.

\subsection{Functors and adjunctions}
The semidirect product functor 
\[
\rtimes  \colon \Pair \to \Op     
\]
associates to a pair $(G,\POp)$ of a simplicial group $G$ and an operad $\POp$ with a $G$-action the operad $\POp\rtimes G$ such that
$$
(\POp\rtimes G)(r)= \POp(r)\times G^{\times r}.
$$
The compositions are defined via \eqref{equ:semidirect comp}.
The operad $\POp\rtimes G$ comes with a natural action of $G$, and a natural map $G\to \POp\rtimes G$.


\begin{lemma}\label{lem:semidirect adj}
We have a Quillen adjunction 
\[
  (-)\rtimes G \colon G\Op
   \rightleftarrows \OpG \colon \iota,
\]
with $\iota$ the forgetful map from operads under $G$ to operads with a $G$-action.
\end{lemma}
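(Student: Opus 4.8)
The plan is to produce the adjunction by direct inspection of the two functors, checking the adjunction bijection at the level of symmetric sequences and verifying that it is compatible with all structure. First I would spell out the forgetful functor $\iota\colon \OpG \to G\Op$: an operad under $G$ is an operad $\QOp$ together with a morphism of operads $G\to\QOp$ (where $G$ is viewed as an operad concentrated in arity $1$ via \eqref{equ:Mon Op incl}); such a morphism is the same as a monoid map from $G$ into the monoid $\QOp(1)$, and conjugation by its image, together with the $\Sigma_r$-action, equips each $\QOp(r)$ with a $G$-action making $\QOp$ a $G$-operad — this is $\iota\QOp$. Then, for a $G$-operad $\POp$, I would construct the unit $\POp \to \iota(\POp\rtimes G)$ as the map $\POp(r)\to \POp(r)\times G^{\times r}$ sending $p\mapsto (p,e,\dots,e)$, and check it is a map of $G$-operads; the structure map $G\to\POp\rtimes G$ is $g\mapsto(*,g)$ in arity $1$ (using $\POp(1)$ is a point, or more precisely that $\POp(1)$ has a distinguished unit; in fact for the adjunction one only needs the operad unit, not contractibility).

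Next I would establish the hom-set bijection
\[
  \Hom_{\OpG}\bigl(\POp\rtimes G,\, \QOp\bigr)
  \;\cong\;
  \Hom_{G\Op}\bigl(\POp,\, \iota\QOp\bigr).
\]
Given $F\colon\POp\rtimes G\to\QOp$ under $G$, restrict along the unit to get $\bar F\colon\POp\to\iota\QOp$; this is $G$-equivariant because $F$ is a map under $G$ and the $G$-action on $\QOp$ is by conjugation with the image of $G$. Conversely, given a $G$-equivariant $\phi\colon\POp\to\iota\QOp$, define $\tilde\phi\colon\POp\rtimes G\to\QOp$ on $\POp(r)\times G^{\times r}$ by $(p,g_1,\dots,g_r)\mapsto \phi(p)\circ(g_1,\dots,g_r)$, where the $g_i$ are read off as elements of $\QOp(1)$ via the structure map $G\to\QOp$ and the composition is the operadic one. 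The content is to check that $\tilde\phi$ respects operadic composition: this is exactly where the semidirect-product formula \eqref{equ:semidirect comp} is designed to match the conjugation identity $g\cdot\phi(q)=(\text{ad}_g\text{ of the }G\text{-image in }\QOp(1))\circ\phi(q)\circ(\dots)$ coming from $\phi$ being a $G$-map and $\QOp$ being a $G$-operad under $G$. I would verify the two assignments are mutually inverse, and that the bijection is natural in both variables; this is a routine but slightly bookkeeping-heavy computation, and it is the main obstacle — everything else is formal.

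Finally, to upgrade the adjunction to a Quillen adjunction it suffices to show the right adjoint $\iota$ preserves fibrations and acyclic fibrations. But both model structures are transferred so that (acyclic) fibrations are detected arity-wise on underlying simplicial sets, and $\iota$ does not change the underlying symmetric sequence — it only remembers extra $G$-action data — so $\iota$ trivially preserves both classes. Hence $((-)\rtimes G, \iota)$ is a Quillen adjunction, as claimed. (One should also note that $(-)\rtimes G$ is enriched over $\sSet$, so the adjunction is simplicial, which is what is needed later for the derived mapping-space statements; this follows since the bijection above is manifestly compatible with simplicial tensoring.)
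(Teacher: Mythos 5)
Your proposal is correct and follows essentially the same route as the paper: the paper phrases the hom-set bijection by observing that $\POp\rtimes G$ is generated by $\POp$ and $G$ subject to the conjugation relation $g\circ p\circ(g^{-1},\dots,g^{-1})=g\cdot p$, which is exactly what your explicit formulas $(p,g_1,\dots,g_r)=(p,e,\dots,e)\circ(g_1,\dots,g_r)$ and $\tilde\phi(p,g_1,\dots,g_r)=\phi(p)\circ(g_1,\dots,g_r)$ unwind to. The Quillen part is identical in both arguments ($\iota$ preserves fibrations and weak equivalences because these are created arity-wise in $\Seq$), and your remark that only the operad unit in $\POp(1)$ is needed, not contractibility, is accurate.
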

\begin{proof}
To check the adjunction relation 
\[
\Mor_{G\Op}(\POp, \iota \QOp) \cong 
\Mor_{\OpG}(\POp\rtimes G, \QOp)
\]
note that $\POp\rtimes G$ is generated by $\POp$ and $G$, with relations those in $\POp$ and $G$ and additionally the relations
\[
g\circ p \circ (g^{-1},\dots,g^{-1})
=
g\cdot p.    
\]
This implies that an operad map from $\POp\rtimes G$ that is fixed on $G$ is the same as a map from $\POp$ that respects the $G$-action.
It is also clear that $\iota$ preserves weak equivalences and fibrations, since they are created in $\Seq$, and is hence right Quillen.
\end{proof}

\begin{lemma}\label{lem:one adj}
We have a Quillen adjunction 
\[
 i \colon \Mon \rightleftarrows \Op \colon (-)(1),    
\]
where the left adjoint $i$ is the inclusion of monoids into operads, see \eqref{equ:Mon Op incl}, and the right adjoint associates to the operad $\POp$ the monoid $\POp(1)$.

The adjunction counit 
\[
\POp(1) \to \POp
\]
is a cofibration in $\Op$ for any cofibrant operad $\POp$.
\end{lemma}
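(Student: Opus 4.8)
The adjunction $i\dashv(-)(1)$ is purely formal. By \eqref{equ:Mon Op incl} the operad $iM$ is the monoid $M$ placed in arity $1$ (with the operadic unit serving as the unit of $M$) and is empty in every other arity, so a map of operads $iM\to\POp$ is exactly a map of monoids $M\to\POp(1)$; this bijection is manifestly natural in $M$ and in $\POp$. For the Quillen property one observes that $(-)(1)$ is the composite of the forgetful functor $\Op\to\Seq$ with evaluation at arity $1$; since weak equivalences and fibrations are created arity-wise both in $\Op$ and in $\Seq$, the functor $(-)(1)$ preserves them and is therefore right Quillen.

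For the assertion about the counit $\epsilon_\POp\colon i(\POp(1))\to\POp$, I would argue by reduction to cell complexes. The assignment $\POp\mapsto\epsilon_\POp$ is natural, so a retract $\POp\to\mathcal R\to\POp$ of operads produces a retract of the morphisms $\epsilon_\POp$ and $\epsilon_{\mathcal R}$; as cofibrations are stable under retracts and every cofibrant operad is a retract of a cellular one, it suffices to treat a cellular $\POp=\colim_\alpha\POp_\alpha$, built from the initial operad $\POp_0=\mathcal I$ by transfinitely attaching cells $\Free_r(\partial\Delta^n)\to\Free_r(\Delta^n)$, where $\Free_r K$ is the free operad on the simplicial set $K$ placed in arity $r$.

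The plan is then to prove, by transfinite induction on the filtration, the following strengthening: for all $\alpha\le\beta$ the comparison map
\[
    \POp_\alpha\sqcup_{i(\POp_\alpha(1))} i(\POp_\beta(1))\longrightarrow\POp_\beta
\]
is a cofibration. Taking $\alpha=0$ and using $\POp_0(1)=*$ recovers $\epsilon_{\POp_\beta}$, and letting $\beta$ run over the filtration recovers $\epsilon_\POp$. At a limit stage one uses that filtered colimits of operads are computed arity-wise, so that $(-)(1)$ commutes with them, together with the fact that $i$ preserves all colimits; the colimit of the comparison maps is then, by an interleaving argument, a transfinite composite of cofibrations. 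At the successor stage, factoring the comparison map through the pushout and noting that its first leg is a cobase change of the inductively known cofibration $i(\POp_\alpha(1))\to\POp_\alpha$, one is reduced to the local statement: if $\epsilon_{\mathcal A}$ is a cofibration and $\mathcal A\to\mathcal A'=\mathcal A\sqcup_{\Free_r(\partial\Delta^n)}\Free_r(\Delta^n)$ is a single cell attachment, then $\mathcal A\sqcup_{i(\mathcal A(1))} i(\mathcal A'(1))\to\mathcal A'$ is a cofibration.

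This local statement is the main obstacle, and it is precisely here that nullary operations complicate matters. When $r=1$ the attached cell only enlarges the arity-$1$ monoid and the verification is immediate, and when $r\ge 2$ while $\mathcal A$ has no nullary operations one has $\mathcal A'(1)=\mathcal A(1)$ and nothing to check; but in general, substituting the newly attached arity-$r$ operations into nullary operations of $\mathcal A$ in all but one slot creates genuinely new arity-$1$ operations, so $\mathcal A'(1)$ is a strictly larger monoid than $\mathcal A(1)$. The hard part is therefore to obtain a workable description of $\POp'(1)$ as an explicit colimit of monoids assembled from $\POp(1)$, $\POp(0)$ and the cell $\Delta^n$, and then to verify that the induced comparison map of operads is a cofibration; once this bookkeeping with the nullary operations is in place, the remaining steps of the induction are formal.
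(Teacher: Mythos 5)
Your handling of the adjunction and of the Quillen property matches the paper's: the bijection $\Mor_{\Op}(iM,\POp)\cong\Mor_{\Mon}(M,\POp(1))$ is read off from the definition of $i$, and $(-)(1)$ is right Quillen because fibrations and weak equivalences are created arity-wise. Your strategy for the counit statement is also the paper's strategy: reduce by naturality and retract-stability to a cell complex, then run a transfinite induction over the cell attachments. In fact your relative induction hypothesis (that the comparison maps $\POp_\alpha\sqcup_{i(\POp_\alpha(1))}i(\POp_\beta(1))\to\POp_\beta$ are cofibrations) is a more careful formulation than the paper's, which works with the class $\mathcal X$ of operads with cofibrant counit and asserts closure of $\mathcal X$ under filtered colimits and pushouts; your version is what one actually needs to make the limit-ordinal step go through.

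The problem is that the proposal stops exactly where the lemma has content. The ``local statement'' --- that a single cell attachment $\mathcal A'=\mathcal A\sqcup_{\Free_r(\partial\Delta^n)}\Free_r(\Delta^n)$ preserves the property --- is not proved: you explicitly defer both the description of $\mathcal A'(1)$ and the verification that the comparison map is a cofibration. Since every other step is formal, this deferred step \emph{is} the proof, so as written the proposal is an outline with a genuine gap. (To be fair, the paper's own argument is only a proof sketch whose corresponding assertion, that $\mathcal X$ is closed under the relevant pushouts, is also left as ``one checks''; you have correctly located the crux.) Two further points. First, your claimed easy sub-case $r=1$ is not actually immediate once $\mathcal A(0)\neq\emptyset$: a new unary generator $g$ produces new nullary operations $g\circ c_0$ with $c_0\in\mathcal A(0)$, and inserting these into old operations of arity $\geq 2$ yields elements of $\mathcal A'(1)$ that do not lie in the monoid generated by $\mathcal A(1)$ and the cell, so the bookkeeping you postpone is needed for every $r$. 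Second, to close the gap one should use the standard filtration of a cellular extension of operads by the number of occurrences of the new generators, analyze the arity-$0$ and arity-$1$ layers simultaneously (they feed into each other), and exhibit the comparison map as a transfinite composite of cobase changes of generating cofibrations; none of this is present in the proposal.
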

The Lemma seems to be known to experts, but we failed to find a citeable reference.
\begin{proof}[Proof sketch.]
The adjunction relation is again (fairly) obvious and left to the reader.
It is clearly a Quillen adjunction since the right adjoint preserves weak equivalences and fibrations, which are such morphisms arity-wise on the level of simplicial sets.

For the last assertion let $\mathcal X$ be the class of all operads $\POp$ such that the adjunction counit $\POp(1)\to \POp$ is a cofibration. Then one checks that $\mathcal X$ is closed under retracts and filtered colimits. It also contains all free objects, in particular domains and targets of the generating (acyclic) cofibrations. One also checks that $\mathcal X$ is closed under pushouts along cofibrations between objects in $\mathcal X$.

But by \cite[Proposition 2.1.18]{Hovey} any cofibrant object in a cofibrantly generated model category is a retract of a cell complex, i.e., a colimit along a transfinite composition $* \to \cdots \to X_n \to X_{n+1}\to\cdots$ of morphisms that are each pushouts along generating cofibrations.
Hence, by transfinite induction, we have that each object $X_n$ and thus the colimit is in $\mathcal X$. 
\end{proof}

\subsection{Lambda operads and Reedy model structure}
 
Let $\Op_*\subset \Op$ be the full subcategory of operads $\POp$ such that $\POp(*)=*$.
Let $\La$ be the category with objects the non-negative integers, and morphisms $m\to n$ the injective (not necessarily order preserving) maps 
\[
\{1,\dots,m\} \to \{1,\dots,n\}.
\]
We have a forgetful functor 
\[
F\colon \Op_* \to \La\sSet := \sSet^{\La^{op}},
\]
that sends an operad $\POp$ to the (positive arity part of the) underlying symmetric sequence, equipped with the operations of operadic composition with $*$.
The category $\La$ is a generalized Reedy category and hence $\La\sSet$ is equipped with the Reedy model structure, see \cite[Theorem 8.3.19]{Frbook}.

Following Fresse \cite{Frextended,Frbook} we define the Reedy model structure on $\Op_*$ to be the one obtained by right transfer along $F$ from the Reedy model structure on $\La\sSet$.

\begin{lemma}\label{lem:Op star adj}
There is a Quillen adjunction with respect to the Reedy model structure on $\Op_*$
\[
   (-)(1) \colon \Op_* \rightleftarrows \Mon \colon \Com \rtimes (-)
\] 
with the left-adjoint being the forgetful functor that takes the unary part $\POp(1)$ of an operad $\POp$.
\end{lemma}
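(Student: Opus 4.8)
I want to show that the forgetful functor $(-)(1)\colon \Op_* \to \Mon$ is left Quillen with respect to the Reedy model structure on $\Op_*$, with right adjoint $\Com \rtimes (-)$. First I would establish the adjunction relation itself: for a monoid $M$ and an operad $\POp \in \Op_*$, a map of operads $\POp \to \Com \rtimes M$ is the same as a map of monoids $\POp(1) \to M$. The key observation is that since $\Com(r) = *$ for all $r \geq 0$, we have $(\Com \rtimes M)(r) = M^{\times r}$, with the semidirect-product composition of \eqref{equ:semidirect comp} reducing to the ``shuffle of copies of $M$'' operad structure. Given an operad map $\Phi\colon \POp \to \Com \rtimes M$, for $p \in \POp(r)$ we get $\Phi(p) = (\phi_1(p),\dots,\phi_r(p)) \in M^{\times r}$; I would check, using compatibility of $\Phi$ with the operadic composition-with-$*$ operations $\POp(r) \to \POp(r-1)$ that define the $\La$-structure, that each component $\phi_i(p)$ is forced to equal $\psi$ applied to the image of $p$ under forgetting all inputs but the $i$-th, where $\psi = \Phi(1)\colon \POp(1)\to M$ is the underlying monoid map; and conversely that any monoid map $\psi$ gives a well-defined operad map via this formula. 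This uses $\POp(0) = *$ to identify the ``forget all but one input'' map, and the functoriality of $\La$-composition to see the formula respects operadic composition.

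**Quillen property.** Having the adjunction, left Quillenness of $(-)(1)$ is equivalent to right Quillenness of $\Com \rtimes (-)$, i.e.\ that $\Com \rtimes(-)$ preserves fibrations and acyclic fibrations for the Reedy structures. Since the Reedy model structure on $\Op_*$ is right-transferred along $F\colon \Op_* \to \La\sSet$, a morphism in $\Op_*$ is a (acyclic) fibration iff its image under $F$ is a (acyclic) Reedy fibration in $\La\sSet$. So it suffices to analyze $F(\Com \rtimes M) = \{ r \mapsto M^{\times r}\}$ as a $\La^{op}$-diagram and show that for a (acyclic) fibration $M \to N$ of monoids, the induced map of $\La^{op}$-diagrams $\{M^{\times r}\}\to\{N^{\times r}\}$ is a (acyclic) Reedy fibration. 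The relevant matching objects are matching objects of the $\La^{op}$-diagram $r\mapsto M^{\times r}$: one computes that the latching/matching structure of $\La$ at level $r$ involves the maps $M^{\times r}\to M^{\times(r-1)}$ obtained by deleting one factor (there are $r$ of them), so the matching object at level $r$ is the limit of these, and the matching map $M^{\times r} \to M_{\Lambda^{op}}^r$ is (up to the combinatorics of $\La$) a projection that is split; hence a fibration whenever $M \to N$ is. I expect this to reduce, after unwinding, to the elementary fact that a product of fibrations over a point is a fibration and that the relevant matching maps are retracts of iterated product projections.

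**The main obstacle.** The genuinely delicate point is the matching-object computation in the Reedy model structure on $\La\sSet$. The category $\La$ is only a \emph{generalized} Reedy category (in the sense of Berger–Moerdijk, invoked via \cite[Theorem 8.3.19]{Frbook}), so its objects carry automorphisms ($\Sigma_r$ acting on $r$), and one must verify the fibrancy/matching condition equivariantly: the matching map must be a fibration in the projective model structure on $\Sigma_r$-simplicial sets, not just underlying. Concretely, I would identify the matching object of $r \mapsto M^{\times r}$ at stage $r$ explicitly — it should be the subspace of $\prod_{i=1}^r M^{\times(r-1)}$ of compatible families under the $\binom{r}{2}$ double-deletion maps, which one checks is again $M^{\times r}$ via a reconstruction argument (a compatible family of ``$(r-1)$-tuples missing the $i$-th slot'' determines a unique $r$-tuple when $r \geq 2$; for $r=0,1$ the matching object is trivial). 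Then the matching map $M^{\times r} \to M_r M$ is an isomorphism for $r \geq 2$ and the projection $M \to *$ for $r = 1$, so it is always a $\Sigma_r$-equivariant fibration when $M \to N$ is a fibration of monoids, and likewise acyclic when $M \to N$ is. This reconstruction-of-the-tuple lemma, together with checking it is $\Sigma_r$-equivariant and compatible with the monoid fibration, is where the real work sits; everything else is formal adjunction bookkeeping and the transfer principle.
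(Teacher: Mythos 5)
Your proof is correct, and your verification of the adjunction is exactly the check the paper leaves to the reader ("one easily verifies"). For the Quillen property you take a genuinely different route. Both arguments start from the same reduction: since the Reedy structure on $\Op_*$ is right-transferred along $F\colon\Op_*\to\La\sSet$ and $F(\Com\rtimes M)=\Free^c_\La(M)$ with $\Free^c_\La(X)(r)=X^{\times r}$, it suffices to show that $\Free^c_\La\colon \sSet\to\La\sSet$ preserves fibrations and acyclic fibrations. You do this by direct computation of matching objects of the cofree diagram $r\mapsto M^{\times r}$: the limit over proper injections into $\{1,\dots,r\}$ is $M^{\times r}$ itself for $r\geq 2$ (an $r$-tuple is reconstructed from its codimension-one restrictions) and trivial for $r\leq 1$, so the relative matching maps of $\Free^c_\La(M)\to\Free^c_\La(N)$ are isomorphisms in arities $\geq 2$ and equal to $M\to N$ in arity one; this is right, and it settles both the fibration and the acyclic-fibration case. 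The paper instead passes to the left adjoint of $\Free^c_\La$, namely evaluation at arity one, and quotes Fresse's characterization (Theorem II.8.3.20 of \cite{Frbook}) that the Reedy (acyclic) cofibrations of $\La\sSet$ are exactly the arity-wise (acyclic) cofibrations of symmetric sequences; hence evaluation at arity one preserves them and $\Free^c_\La$ is right Quillen. Your version is more self-contained but does the matching-object work by hand; the paper's is shorter at the cost of citing a nontrivial structural result. One small correction to your discussion of the generalized Reedy structure: the fibration condition asks that relative matching maps be fibrations in the \emph{projective} model structure on $\Sigma_r$-simplicial sets, and these are just the underlying Kan fibrations, so there is no extra equivariant check to perform there --- the genuine $\Sigma_r$-subtlety in a generalized Reedy category sits on the latching/cofibration side, which neither your argument nor the paper's needs to touch.
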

\begin{proof}
One easily verifies the adjunction relation.

To check that the adjunction is Quillen, note that we have a commutative diagram 
\[
\begin{tikzcd} 
    \Op_* \ar{d} & \Mon \ar{l}{\Com \rtimes (-)} \ar{d} \\
    \La\sSet & \sSet\ar{l}{\Free^c_\La}
\end{tikzcd}  
\]
with $\Free^c_\La$ the cofree $\La$ object, defined such that 
\[
    \Free^c_\La(X)(r) = X^{\times r}.
\]
All arrows in the diagram are right adjoints. The model structures of the categories in the upper row are defined by transfer along the vertical forgetful functors. It follows that $\Com \rtimes (-)$ is right Quillen if $\Free^c_\La$ is right Quillen. But this follows if its left adjoint (forgetful) functor $\La\sSet\to \sSet$ is left Quillen. But by \cite[Theorem II.8.3.20]{Frbook} the (acyclic) cofibrations in $\La\sSet$ are the morphisms that are (acyclic) cofibrations in $\Seq$.
In particular the arity one part of a symmetric sequence is just a simplicial set, and hence the forgetful functor does indeed preserve (acyclic) cofibrations. 
\end{proof}

We will need below the following corollary.

\begin{cor}\label{cor:only one}
    Let $\POp\in \Op_*$ be an operad such that $\POp(1)\simeq *$ and let $G$ be a simplicial monoid. Then we have that
    \begin{equation}\label{equ:prop only one}
    \Map_{\Op}^h(\POp, \Com\rtimes G) \simeq *.
    \end{equation}
    \end{cor}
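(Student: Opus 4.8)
The plan is to use the Quillen adjunction from Lemma~\ref{lem:Op star adj},
\[
   (-)(1) \colon \Op_* \rightleftarrows \Mon \colon \Com \rtimes (-),
\]
which allows us to compute the derived mapping space $\Map_{\Op}^h(\POp,\Com\rtimes G)$ as a derived mapping space in $\Mon$. Concretely, since the right adjoint $\Com\rtimes(-)$ preserves weak equivalences (its source $\Mon$ has all objects fibrant in the relevant transferred/Reedy sense, and $\Com\rtimes G$ is already Reedy fibrant because $\Com$ is), and since $\POp$ is cofibrant in $\Op_*$ (or we replace it by a cofibrant model), the adjunction gives a weak equivalence
\[
\Map_{\Op_*}^h(\POp,\Com\rtimes G)\simeq \Map_{\Mon}^h(\POp(1),G).
\]
One small point to check first is that a derived mapping space computed in $\Op_*$ agrees with the one computed in $\Op$; this holds because $\Op_*$ is a full reflective (indeed coreflective via the forgetful inclusion, with the truncation/composition-with-$*$ providing the adjoint on the other side) subcategory whose model structure is compatible, and both $\POp$ and $\Com\rtimes G$ lie in $\Op_*$ — here one uses the hypotheses $\POp(0)=*$ and that $\Com\rtimes G$ has $(\Com\rtimes G)(0)=\Com(0)\times G^{\times 0}=*$.

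Next I would use the hypothesis $\POp(1)\simeq *$. Since weak equivalences in $\Mon$ are detected on underlying simplicial sets, and the right adjoint $(-)(1)$ is left Quillen, we have $\POp(1)$ weakly equivalent to the terminal monoid $*$ (the contractible monoid; note the monoid structure on a contractible space is automatically equivalent to the trivial one since $\Map_\Mon^h(*,-)$ is what we want). Therefore
\[
\Map_{\Mon}^h(\POp(1),G)\simeq \Map_{\Mon}^h(*,G).
\]
Finally, $*$ is the initial object of $\Mon$ (the trivial one-point monoid), so $\Map_{\Mon}^h(*,G)\simeq *$: there is a unique monoid map from the trivial monoid, and this remains true after cofibrant replacement of $*$ since any cofibrant model of the terminal/initial monoid still maps uniquely up to contractible choice. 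Chaining these equivalences yields $\Map_{\Op}^h(\POp,\Com\rtimes G)\simeq *$, which is \eqref{equ:prop only one}.

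The main obstacle I expect is the bookkeeping around which model structure to work in and making the cofibrancy/fibrancy hypotheses precise: the adjunction of Lemma~\ref{lem:Op star adj} is stated for the \emph{Reedy} model structure on $\Op_*$, whereas $\Map_{\Op}^h$ presumably refers to the projective structure on $\Op$, so one must argue that the two derived mapping spaces coincide (e.g.\ because the identity functor between the projective and Reedy structures on $\Op_*$ is a Quillen equivalence, or because both structures have the same weak equivalences and one can compare fibrant–cofibrant replacements). A secondary subtlety is verifying that $\Com\rtimes G$ is Reedy fibrant in $\Op_*$, which should reduce via the commutative diagram in the proof of Lemma~\ref{lem:Op star adj} to the statement that $\Free^c_\La$ of a fibrant simplicial set (every simplicial set is fibrant) is Reedy fibrant — a matching-object computation that is routine but needs to be invoked correctly. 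Everything else is a formal consequence of adjunction and the two contractibility hypotheses.
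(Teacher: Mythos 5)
Your proposal follows the same route as the paper's proof: pass from $\Op$ to $\Op_*$, apply the Quillen adjunction of Lemma~\ref{lem:Op star adj} to transport the derived mapping space to $\Mon$, and observe that $\POp(1)\simeq *$ is a model of the initial (and cofibrant) simplicial monoid, so the derived mapping space out of it is contractible. The one step whose justification does not hold up is the identification $\Map^h_{\Op}(\POp,\Com\rtimes G)\simeq \Map^h_{\Op_*}(\POp,\Com\rtimes G)$. The subcategory $\Op_*$ is not coreflective in $\Op$ as you claim: a right adjoint to the inclusion cannot exist, since an operad $\QOp$ with $\QOp(0)=\emptyset$ receives no maps from any object of $\Op_*$, whereas $\Op_*$ has an initial object. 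The inclusion does admit a left adjoint (collapsing the arity-zero part), and this identifies the \emph{underived} simplicial mapping spaces; but deriving that identification is not formal, because one must compare the projective model structure on $\Op$ with the Reedy model structure on $\Op_*$ used in Lemma~\ref{lem:Op star adj}, and cofibrant/fibrant objects need not correspond. This comparison is precisely the content of the theorem the paper invokes here, namely that the inclusion $\Op_*\subset\Op$ is homotopically fully faithful \cite{FTW3}; it is a genuine theorem rather than an instance of (co)reflectivity. Replacing your reflectivity argument by that citation, the rest of your proof is correct and coincides with the paper's.
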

    \begin{proof}
        By  \cite[Theorem 1]{FTW3} the inclusion $\Op_*\subset \Op$ is homotopically fully faithful, so that
        \[
            \Map_{\Op}^h(\POp, \Com\rtimes G) \simeq 
            \Map_{\Op_*}^h(\POp, \Com\rtimes G).
        \] 

The Corollary then follows immediately from the Quillen adjunction of Lemma \ref{lem:Op star adj}. 
    

    \end{proof}

\section{Proof of Theorem \ref{thm:main}}

\subsection{Two fibration lemmas}

\begin{lemma}\label{lem:unary fib}
    Let $\POp,\QOp\in \Op$ be operads such that $\POp$ is cofibrant and $\QOp$ is fibrant. Then the restriction map to the unary part 
    \begin{equation}\label{equ:unary fib}
    \Map_{\Op}(\POp, \QOp) \to \Map_{Mon}(\POp(1), \QOp(1))    
    \end{equation}
    is a fibration. The fiber over a morphism $f:\POp(1)\to \QOp(1)$ is $\Map_{\Op^{\POp(1)/}}(\POp,\QOp)$, where $\QOp$ is made an operad under $\POp(1)$ via $f$.
\end{lemma}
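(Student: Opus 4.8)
The plan is to deduce Lemma~\ref{lem:unary fib} from the Quillen adjunction of Lemma~\ref{lem:one adj} between $i\colon \Mon \rightleftarrows \Op \colon (-)(1)$. The statement is really a relative/fibration statement about mapping spaces under a Quillen adjunction, so the first step is to replace the naive mapping spaces by the correct simplicial enrichments. Concretely, I would use that $\Op$ is a simplicial model category (or work with the simplicial framings / cosimplicial resolutions of $\POp$), so that $\Map_{\Op}(\POp,\QOp)$ and $\Map_{\Mon}(\POp(1),\QOp(1))$ are genuine mapping spaces between a cofibrant and a fibrant object, hence Kan complexes with the expected homotopy type. Since $\POp$ is cofibrant and $\QOp$ fibrant, the counit $i(\POp(1)) = \POp(1)\to \POp$ is a cofibration by the last assertion of Lemma~\ref{lem:one adj}, and $\QOp$ being fibrant means $\QOp(1)$ is a fibrant monoid (the right adjoint preserves fibrant objects), so both sides are homotopically meaningful without further replacement.

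Next I would identify the restriction map \eqref{equ:unary fib} with the map induced by precomposition along the cofibration $\POp(1)\to \POp$, using the adjunction isomorphism $\Map_{\Op}(i M,\QOp)\cong \Map_{\Mon}(M,\QOp(1))$ naturally in $M$. The key general fact to invoke is the standard SM7/pushout-product axiom: in a simplicial model category, if $j\colon A\to B$ is a cofibration and $X$ is fibrant, then $\Map(B,X)\to \Map(A,X)$ is a fibration of simplicial sets. Applying this with $j$ the cofibration $\POp(1)\to\POp$ and $X=\QOp$ fibrant gives that $\Map_{\Op}(\POp,\QOp)\to \Map_{\Op}(\POp(1),\QOp) \cong \Map_{\Mon}(\POp(1),\QOp(1))$ is a fibration, which is exactly \eqref{equ:unary fib}. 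One subtlety here is that the right-hand mapping space in $\Op$ is taken on the object $i(\POp(1))$, and one must check the adjunction is simplicial, i.e.\ compatible with the simplicial enrichments; this follows because $i$ preserves tensors (it is a left adjoint and $i(M\otimes K)(r)$ for $r=1$ is $M\times K = (i M \otimes K)(1)$, and $\emptyset$ for $r\neq 1$, matching $i M\otimes K$).

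For the statement about the fiber, I would observe that the fiber of $\Map_{\Op}(\POp,\QOp)\to\Map_{\Mon}(\POp(1),\QOp(1))$ over a vertex $f\colon \POp(1)\to\QOp(1)$ is by definition the space of operad maps $\POp\to\QOp$ (with their simplicial parameter) whose restriction to the unary part is $f$. Via the adjunction and the universal property of the slice category, a map $\POp\to\QOp$ restricting to $f$ on arity one is precisely a map in the under-category $\Op^{\POp(1)/}$ from $\POp$ (with its tautological map from $\POp(1)$, the counit) to $\QOp$ (made an object under $\POp(1)$ via $i(f)\colon \POp(1)\to \QOp(1)\to\QOp$, i.e.\ the adjoint of $f$). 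Since slice mapping spaces are computed as the fiber of the forgetful map on mapping spaces, this identifies the fiber with $\Map_{\Op^{\POp(1)/}}(\POp,\QOp)$ as claimed; and this is homotopically meaningful because $\POp(1)\to\POp$ is a cofibration (so $\POp$ is cofibrant in the slice) and $\QOp$ is fibrant in the slice.

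The main obstacle I anticipate is purely bookkeeping rather than conceptual: making sure the model category $\Op$ is simplicial in the right sense and that the adjunction $i\dashv (-)(1)$ is a simplicial adjunction, so that the abstract SM7 argument applies on the nose and the fiber identification is with the \emph{simplicial} slice mapping space. If one does not want to invoke a simplicial model structure on $\Op$ directly, the alternative is to work with cosimplicial/simplicial resolutions (framings à la Hovey or Dwyer--Kan) of $\POp$ and run the same pushout-product argument levelwise; this is slightly more verbose but avoids any assumption about $\Op$ being simplicially enriched as a model category. Either way the heart of the proof is the single application of the pushout-product axiom to the cofibration $\POp(1)\to\POp$ supplied by Lemma~\ref{lem:one adj}.
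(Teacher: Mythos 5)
Your proposal is correct and follows essentially the same route as the paper: both proofs use the cofibration $\POp(1)\to\POp$ supplied by Lemma \ref{lem:one adj}, identify the base via the adjunction isomorphism $\Map_{\Mon}(\POp(1),\QOp(1))\cong\Map_{\Op}(\POp(1),\QOp)$, apply the pushout-product/SM7 axiom to conclude the restriction map is a fibration, and read off the fiber as the slice mapping space. Your additional care about the adjunction being simplicial is a reasonable elaboration of a point the paper leaves implicit, but it does not change the argument.
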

\begin{proof}
The inclusion $\POp(1)\to \POp$ is a cofibration by Lemma \ref{lem:one adj}.
By the adjunction of that Lemma we also have that 
\[
    \Map_{\Mon}(\POp(1), \QOp(1)) 
    = 
    \Map_{\Op}(\POp(1), \QOp).  
\]
Furthermore, the morphism \eqref{equ:unary fib} of the Proposition is obtained by precomposition with the map $\POp(1)\to \POp$. But because that map is a cofibration we know that \eqref{equ:unary fib} is a fibration. The fiber over the identity is evidently $\Map_{\Op^{\POp(1)/}}(\POp,\QOp)$.
\end{proof}

\begin{lemma} \label{lem:QG ComG fib} 
        Let $\POp,\QOp\in G\Op$ be $G$-operads such that $\POp$ is cofibrant and $\QOp$ is fibrant. Also suppose $G$ is fibrant.
        Then the map 
        \begin{equation}\label{equ:framed fib}
            \Map_{\GOp}(\POp, \QOp\rtimes G) 
            \to
            \Map_{\GOp}(\POp, \Com\rtimes G)    
        \end{equation}
        obtained by composition with the canonical $G$-operad morphism $\QOp\rtimes G\to \Com\rtimes G$ is a fibration with fiber 
        \[
        \Map_{\GOp}(\POp, \QOp).    
        \]
\end{lemma}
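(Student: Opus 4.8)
The plan is to identify the map \eqref{equ:framed fib} with a pullback-corner (Leibniz) construction applied to a cofibration on one side and a fibration on the other, so that the fibration property becomes an instance of the pushout-product axiom, and simultaneously read off the fiber. First I would unwind what $\Map_{\GOp}(\POp,-)$ does to the canonical projection $\pi:\QOp\rtimes G\to \Com\rtimes G$: since $\Com$ is the terminal operad (in each arity a point), the $G$-operad $\Com\rtimes G$ is just $\Free^c_\La(G)$ with the semidirect structure, i.e.\ it records only the "$G$-part" $G^{\times r}$ of each arity, and $\pi$ forgets the $\QOp$-coordinate. Thus on the level of underlying objects, $\QOp\rtimes G\to \Com\rtimes G$ is, arity by arity, the projection $\QOp(r)\times G^{\times r}\to G^{\times r}$, which is a fibration of simplicial sets because $\QOp$ is fibrant; but more is true — it is a $G$-operad fibration and its strict fiber over the canonical section is exactly $\QOp$ (with its $G$-action), because the fiber of $\QOp(r)\times G^{\times r}\to G^{\times r}$ over the unit is $\QOp(r)$, compatibly with all operadic and $G$-structure. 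So the statement about the fiber is immediate once the map is shown to be a fibration: the fiber of \eqref{equ:framed fib} over the canonical point is $\Map_{\GOp}(\POp,\text{fib}(\pi))=\Map_{\GOp}(\POp,\QOp)$.

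For the fibration property itself, I would argue that $\pi:\QOp\rtimes G\to \Com\rtimes G$ is a fibration in $\GOp$. This is clear since fibrations in $\GOp$ are created arity-wise in $\sSet$ (by the definition of the transferred model structure), and arity-wise $\pi$ is the projection $\QOp(r)\times G^{\times r}\to G^{\times r}$, a fibration because $\QOp$ is fibrant and the projection off a fibrant factor in a product is a fibration. Then, since $\POp$ is cofibrant, the simplicial mapping space functor $\Map_{\GOp}(\POp,-)$ is a right Quillen functor into $\sSet$ in the appropriate enriched sense — concretely, for a cofibration $A\to B$ and a fibration $X\to Y$ in $\GOp$, the Leibniz map $\Map_{\GOp}(B,X)\to \Map_{\GOp}(B,Y)\times_{\Map_{\GOp}(A,Y)}\Map_{\GOp}(A,X)$ is a fibration, acyclic if either input is. Applying this with $A=\emptyset$, $B=\POp$, $X=\QOp\rtimes G$, $Y=\Com\rtimes G$ collapses the pullback to $\Map_{\GOp}(\POp,\Com\rtimes G)$ and yields exactly that \eqref{equ:framed fib} is a fibration.

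The main thing to justify carefully is that $\GOp$ is a simplicial model category — or at least that the relevant SM7-type statement for $\Map_{\GOp}$ holds — so that "cofibrant source + fibrant-to-fibrant target" really does give a fibration of mapping spaces. Since $G$ is a simplicial \emph{group} and $\GOp$ is algebras over a colored operad in $\sSet$, this is standard (the Berger–Moerdijk framework gives simplicial cofibrantly generated structures), but one should note that $\Map_{\GOp}(\POp,-)$ is defined via the simplicial cotensor and that the cotensor of a fibrant $G$-operad is again fibrant, so the hypotheses line up. I expect the only genuine subtlety — and hence the step to be most careful about — is confirming that the projection $\QOp\rtimes G\to\Com\rtimes G$ really is a map of $G$-operads (not merely of symmetric sequences) with the asserted strict fiber; this follows by direct inspection of the composition formula \eqref{equ:semidirect comp}, where the $G$-coordinates transform independently of the $\QOp$-coordinate, but it is worth spelling out one line. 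Everything else is a formal consequence of the model-categorical machinery already set up in Section~2.
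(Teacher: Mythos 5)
Your proposal is correct and follows essentially the same route as the paper: reduce to checking that $\QOp\rtimes G\to \Com\rtimes G$ is a fibration in $\GOp$, verify this arity-wise as the projection $\QOp(r)\times G^{\times r}\to G^{\times r}$ off the fibrant factor $\QOp(r)$, and then invoke the simplicial (SM7) structure with the cofibrant source $\POp$ to conclude; the fiber identification is likewise the same. The only difference is that you spell out the pushout-product/Leibniz step that the paper leaves implicit.
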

\begin{proof}
It suffices to check that the morphism $\QOp\rtimes G\to \Com\rtimes G$ is a fibration.
But by definition of the model structure this means that in each arity $r$ the morphism 
\[
    (\QOp\rtimes G)(r) = \QOp(r)\times G^{\times r} 
    \to 
    (\Com\rtimes G)(r)= G^{\times r}
\]
is an $\sSet$-fibration.
But the morphism is a product of two fibrations, namely the map $\QOp(r)\to *$ (by fibrancy of $\QOp$) and the identity on the factor $G^{\times r}$, and hence itself a fibration. Again, the identification of the fiber is obvious.
\end{proof}

\subsection{Proof of the first part of Theorem \ref{thm:main}}
\begin{lemma}\label{lem:pre part1}
The sequence \eqref{equ:main fib seq} is a homotopy fiber sequence of simplicial sets.
\end{lemma}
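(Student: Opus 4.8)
The plan is to build the homotopy fiber sequence \eqref{equ:main fib seq} by assembling the two fibration lemmas of the previous subsection together with the vanishing result of Corollary \ref{cor:only one}, applied to suitable cofibrant/fibrant replacements. Throughout I would fix a cofibrant replacement $\POp^c\xrightarrow{\sim}\POp$ in $\GOp$ and a fibrant replacement in the relevant categories, so that all mapping spaces below compute the derived mapping spaces; since $\POp(1)\simeq *$ and $\POp(0)=*$, these hypotheses are preserved up to weak equivalence and I can assume $\POp$ itself is cofibrant as a $G$-operad with $\POp(1)$ still contractible. The key observation is that $\Aut^h_{\Op}(\QOp)$, $\Aut^h_{\GOp}(\POp)$ and $\Aut^h_{Mon}(G)$ all sit inside mapping spaces to which the two fibration lemmas apply after passing to the relevant under-categories or $G$-equivariant categories.

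The main steps, in order, would be: (1) Using Lemma \ref{lem:semidirect adj}, identify $\Map_{\Op}^h(\QOp,\QOp)=\Map_{\Op}^h(\POp\rtimes G,\QOp)$ and, via the under-category description, relate maps of the framed operad $\QOp$ to pairs consisting of a monoid endomorphism of $G=\QOp(1)$ (here I use $\POp(1)\simeq *$ so that $(\POp\rtimes G)(1)\simeq G$) together with a compatible $G$-equivariant map out of $\POp$. Concretely, combining Lemma \ref{lem:unary fib} applied to $\QOp$ with Lemma \ref{lem:QG ComG fib}, one gets that the restriction-to-arity-one map
\[
\Map_{\Op}^h(\QOp,\QOp)\to \Map_{Mon}^h(G,G)
\]
is a fibration whose fiber over $\id_G$ is, by the fiber identification in Lemma \ref{lem:unary fib} followed by the Quillen adjunction of Lemma \ref{lem:semidirect adj}, the space $\Map_{\GOp}^h(\POp,\QOp\rtimes G)$ --- here one must be slightly careful: the operad-under-$G$ structure on $\QOp=\POp\rtimes G$ is exactly the canonical one, so the under-category mapping space becomes a $G$-equivariant mapping space into $\POp\rtimes G$. (2) Now apply Lemma \ref{lem:QG ComG fib} with the target operad taken to be $\POp$ itself (suitably fibrant), giving a fibration
\[
\Map_{\GOp}^h(\POp,\POp\rtimes G)\to \Map_{\GOp}^h(\POp,\Com\rtimes G)
\]
with fiber $\Map_{\GOp}^h(\POp,\POp)$. (3) Invoke Corollary \ref{cor:only one}: since $\POp(1)\simeq *$, the base $\Map_{\Op}^h(\POp,\Com\rtimes G)$ is contractible, hence so is its $G$-equivariant refinement (or one argues directly that $\Map_{\GOp}^h(\POp,\Com\rtimes G)\simeq *$ by the same adjunction argument, noting $\Com\rtimes G$ is the terminal-ish object that absorbs the combinatorics), and therefore the inclusion of the fiber $\Map_{\GOp}^h(\POp,\POp)\hookrightarrow \Map_{\GOp}^h(\POp,\POp\rtimes G)$ is a weak equivalence. (4) Restricting everything to the appropriate connected components of automorphisms/equivalences, steps (1)--(3) combine to a fiber sequence
\[
\Aut^h_{\GOp}(\POp)\to \Aut^h_{\Op}(\QOp)\to \Aut^h_{Mon}(G),
\]
which is exactly \eqref{equ:main fib seq}; the only thing left to check is that a self-map of $\QOp$ covering a weak equivalence of $G$ and restricting to a weak equivalence of $\POp$ is itself a weak equivalence of operads, which is immediate since weak equivalences are detected arity-wise and $(\POp\rtimes G)(r)=\POp(r)\times G^{\times r}$.

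The step I expect to be the main obstacle is (1), specifically the bookkeeping needed to pass from $\Map_{\Op}^h(\QOp,\QOp)$ to the $G$-equivariant picture: one has to match the under-$G$ structure coming from Lemma \ref{lem:one adj}/\ref{lem:unary fib} with the semidirect-product adjunction of Lemma \ref{lem:semidirect adj}, and verify that the canonical operad-under-$G$ structure on $\POp\rtimes G$ is the one making the fiber identification come out as a genuine $G$-operad mapping space rather than merely an under-$G$ operad mapping space. This is where the hypotheses $\POp(0)=*$ and $\POp(1)\simeq *$ are genuinely used --- the former to ensure $\POp\in\Op_*$ so the $\Lambda$-operad machinery and Corollary \ref{cor:only one} apply, the latter to identify $\QOp(1)\simeq G$ so that the arity-one restriction really lands in $\Aut^h_{Mon}(G)$ and not something larger. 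Once this identification is pinned down, steps (2)--(4) are formal consequences of the two fibration lemmas and Corollary \ref{cor:only one}.
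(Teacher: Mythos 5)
Your proposal follows essentially the same route as the paper's proof: Lemma \ref{lem:unary fib} gives the fibration onto $\Aut^h_{\Mon}(G)$, the adjunction of Lemma \ref{lem:semidirect adj} converts the fiber (an under-$G$ mapping space) into a $G$-equivariant mapping space into $\POp\rtimes G$, and Lemma \ref{lem:QG ComG fib} together with Corollary \ref{cor:only one} collapses that onto $\Aut^h_{G\Op}(\POp)$. The only bookkeeping the paper does that you elide is that the unary part of a fibrant--cofibrant replacement $\hat\QOp$ is a replacement $\hat G$ of $G$ rather than $G$ itself, so one works in $\hat G\Op$ and invokes the Quillen equivalence $\hat G\Op \simeq G\Op$ at the very end; this is routine and does not affect the correctness of your argument.
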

\begin{proof}
    Let $\hat \QOp$ be a fibrant and cofibrant replacement of $\QOp:=\POp\rtimes G$.
    Then we have that 
    \[
    \Aut^h_{\Op}(\QOp) := \Map_{\Op}'(\hat \QOp,\hat \QOp),    
    \]
    where $'$ indicates that we take the subspace consisting of the connected components of homotopy invertible morphisms.
    Applying Lemma \ref{lem:unary fib} we obtain a fibration 
    \begin{equation}\label{equ:QOp fib 1a}
        \Map_{\Op}'(\hat \QOp,\hat \QOp) \to 
        \Map_{\Mon}'(\hat \QOp(1),\hat \QOp(1)).
    \end{equation}
    Since $\hat G:=\hat \QOp(1)$ is a fibrant and cofibrant replacement for $G=\QOp(1)$, the right-hand side above is
    \[
        \Map_{\Mon}'(\hat \QOp(1),\hat \QOp(1)) =: \Aut^h_{\Mon}(G).    
    \]
    On the other hand, the fiber of \eqref{equ:QOp fib 1a} over the identity (and hence over any other point as well) is 
    \[
        \pAut_{\OphG}(\hat \QOp):= \Map'_{\OphG}(\hat \QOp, \hat \QOp ).
    \]
    Let $\hat \POp$ be a cofibrant replacement of $\POp$ in the category $\hat G\Op$. Then we have that $\hat G\to \hat \POp\rtimes\hat G$ is a cofibrant object of $\OphG$ by the Quillen adjunction of Lemma \ref{lem:semidirect adj}. It is also weakly equivalent to $\QOp$. Hence
    \[
    \Map'_{\OphG}(\hat \QOp, \hat \QOp )
    \simeq 
    \Map'_{\OphG}(\hat \POp\rtimes\hat G, \hat \QOp )
    \cong
    \Map'_{\hat G\Op}(\hat \POp, \hat \QOp ),
    \]
    where in the last step we again used the adjunction of Lemma \ref{lem:semidirect adj}.
    
    Next we apply Lemma \ref{lem:QG ComG fib} to see that $\Map'_{\hat G \Op}(\hat \POp, \hat \QOp )$ fits into a homotopy fiber sequence 
    \[
        \Map'_{\hat G \Op}(\hat \POp, \hat \POp )
        \to 
        \Map'_{\hat G \Op}(\hat \POp, \hat \QOp )
        \to 
        \Map_{\hat G \Op}(\hat \POp, \Com\rtimes G ).
    \]
    Since the base is contractible by Corollary \ref{cor:only one} we know that indeed $\Map'_{\hat G \Op}(\hat \POp, \hat \POp )
    \simeq
    \Map'_{\hat G \Op}(\hat \POp, \hat \QOp )$.
    We conclude that 
    \[
        \pAut_{\hat G\Op}(\hat \POp)
        \simeq \pAut_{\OphG}(\hat \QOp)
    \]
    as desired.
    Finally, the categories $\hat G\Op$ and $G\Op$ are Quillen equivalent (see, e.g., \cite[Theorem 16.A]{Frmodules}), and hence we have $\pAut_{\hat G \Op}(\hat \POp)=\Aut_{\hat G \Op}^h(\hat \POp)\simeq \Aut_{G \Op}^{h}(\POp)$.
\end{proof}

Now we continue with the proof of the first part of Theorem \ref{thm:main}.
We have a homotopy commutative diagram 
$$
    \begin{tikzcd}
        \Aut^h_{G\Op}(\POp)
        \ar{r}\ar{d}{=} &
        \Aut^h_{\Pair}((G,\POp)) \ar{r}\ar{d} &
        \Aut^h_{\Mon}(G) \ar{d}{=} \\
        \Aut^h_{G\Op}(\POp) \ar{r} &
        \Aut^h_{\Op}(\POp\rtimes G) \ar{r} &
        \Aut^h_{\Mon}(G).
    \end{tikzcd}
$$
The top and bottom row are homotopy fiber sequences, due to Lemma \ref{lem:pre part1}. Hence from the associated diagram of long exact sequences of homotopy groups we conclude that $\Aut^h_{\Pair}((G,\POp)) \simeq \Aut^h_{\Op}(\POp\rtimes G)$ as simplicial monoids.

\subsection{Proof of the second part of Theorem \ref{thm:main}}
The second assertion of Theorem \ref{thm:main} is a special case of the following general result on $\infty$-categories.

\begin{prop}\label{prop : equivariant Aut}
    Let $C$ be an $\infty$-category. Let $G$ be a grouplike $E_1$-space and $X$ an object $C^{BG}$. Let $f:BG\to BAut_C(X)$ be the map giving $X$ its action of $G$, then there is a weak equivalence
    \[B\Aut_{C^{BG}}(X)\simeq \Map(BG,B\Aut_{C}(X))_f\]
    where the $f$ subscript notation denotes the connected component of the mapping space containing the map $f$. 
    \end{prop}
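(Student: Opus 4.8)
The plan is to identify both sides as mapping spaces out of $BG$ into a single $\infty$-category, and then invoke the general principle that $\Map(BG, -)$ commutes with the formation of automorphism (or endomorphism) spaces of objects in a suitable sense. Concretely, the left-hand side $B\Aut_{C^{BG}}(X)$ is, by the usual recollection on classifying spaces of automorphism groups, the connected component of the object $X$ inside the maximal $\infty$-groupoid $(C^{BG})^{\simeq}$. So the first step is to observe that $(C^{BG})^{\simeq} \simeq (C^{\simeq})^{BG}$, which holds because passing to the maximal sub-$\infty$-groupoid is a right adjoint (to the inclusion of $\infty$-groupoids into $\infty$-categories) and hence commutes with the limit defining $C^{BG} = \mathrm{Fun}(BG, C)$; more elementarily, an object of $C^{BG}$ is invertible (as a morphism, i.e. all its structure morphisms are equivalences) if and only if its underlying object in $C$ is, so the two maximal sub-$\infty$-groupoids agree. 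Thus
\[
    B\Aut_{C^{BG}}(X) \simeq \left( (C^{\simeq})^{BG} \right)_X,
\]
the connected component of $(C^{\simeq})^{BG} = \Map(BG, C^{\simeq})$ containing the point $X: BG \to C^{\simeq}$ classifying the $G$-action.

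Next I would unwind the right-hand side. The space $C^{\simeq}$ is an $\infty$-groupoid, and for a connected $\infty$-groupoid the component of $C^{\simeq}$ containing $X$ is precisely $B\Aut_C(X)$; in general $C^{\simeq} \simeq \coprod_{[Y]} B\Aut_C(Y)$, the coproduct running over equivalence classes of objects. The datum of the $G$-action $f: BG \to B\Aut_C(X)$ is, after composing with the inclusion of the component, exactly the point $X \in \Map(BG, C^{\simeq})$ from the previous paragraph. Since $BG$ is connected, any map $BG \to C^{\simeq} \simeq \coprod_{[Y]} B\Aut_C(Y)$ factors through a single component $B\Aut_C(Y)$, and the component of $\Map(BG, C^{\simeq})$ through $f$ is the component of $\Map(BG, B\Aut_C(X))$ through $f$. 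Combining with the identification of the first paragraph gives
\[
    B\Aut_{C^{BG}}(X) \simeq \Map\bigl(BG, C^{\simeq}\bigr)_X \simeq \Map\bigl(BG, B\Aut_C(X)\bigr)_f,
\]
which is the claim.

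The main obstacle — and the step that needs care rather than cleverness — is the very first identification $(C^{BG})^{\simeq} \simeq (C^{\simeq})^{BG}$ together with the compatibility of the basepoints: one must check that the object $X \in C^{BG}$, viewed as a point of the left-hand side, is carried to the map $BG \to C^{\simeq}$ that genuinely classifies the given $G$-action, i.e. whose adjoint $BG \to B\Aut_C(X)$ is the originally specified $f$. This is really a statement about the functoriality of the equivalence $\mathrm{Fun}(BG,C)^{\simeq} \simeq \mathrm{Map}(BG, C^{\simeq})$ and about the definition of the action map $f$ as "the object $X$ of $C^{BG}$, with its component remembered"; it is true essentially by construction, but spelling it out requires being precise about what "the map giving $X$ its $G$-action" means. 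I would phrase it as: by definition $f$ is the map $BG \to (C^{\simeq})_{[X]} = B\Aut_C(X)$ adjoint to $X \in \mathrm{Map}(BG, C^{\simeq})$, so the two descriptions of the basepoint coincide tautologically, and the equivalence of spaces then follows from the two paragraphs above. A secondary, purely bookkeeping point is to make sure the equivalences are equivalences of spaces only (the proposition asserts a weak equivalence, not an equivalence of grouplike $E_1$-spaces), so no delooping compatibility beyond the level of underlying spaces is needed.
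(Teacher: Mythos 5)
Your argument is correct and follows essentially the same route as the paper: both identify $(C^{BG})^{\simeq}\simeq (C^{\simeq})^{BG}$ via the right-adjointness of the maximal sub-$\infty$-groupoid functor and then restrict to the connected component of $X$. Your additional care about matching basepoints and about $BG$ being connected only makes explicit what the paper leaves implicit.
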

    
    \begin{proof}
    For an $\infty$-category $C$, we denote by $C^{\simeq}$ the maximal $\infty$-groupoid contained in $C$. The functor $C\mapsto C^{\simeq}$ is right adjoint to the inclusion functor from $\infty$-groupoids to $\infty$-categories. From this universal property, we see that there is an equivalence of $\infty$-groupoids
    \[(C^{BG})^{\simeq}\simeq (C^{\simeq})^{BG}\]
    restricting this equivalence to the connected component of $X$, we get exactly the desired equivalence.
    \end{proof}

    Now every simplicial model category $M$ determines an $\infty$-category that we denote by $M_{\infty}$. The morphism spaces in the $\infty$-category are weakly equivalent to the derived mapping spaces of the model category, see \cite[Theorem 4.6.8.5]{Kerodon}. Moreover, if $G$ is a grouplike simplicial monoid, there is an equivalence of infinity-categories
    \[(\Op)_\infty^{BG}\simeq (G\Op)_\infty\]
    by \cite[Proposition 4.2.4.4]{HTT}
    We may hence apply the above proposition also to the category $\Op$ and obtain that 
    \[
    B\Aut_{G\Op}^h(\POp) \simeq 
    \Map_{\sSet}^h\left(BG, B\Aut_{\Op}^h(\POp)\right)_{Bf}
    \] 
    as desired. This concludes the proof of the second part of theorem \ref{thm:main}.
    \hfill\qed 
    
\section{Application to the framed little $n$-disks operads}

The goal of this section is to show Theorems \ref{thm:main flD top}, \ref{thm:main flD Q} and \ref{thm:main flD profinite}.

\subsection{Proof of Theorem \ref{thm:main flD top}}

Thanks to the first part of Theorem \ref{thm:main}, we have a fiber sequence
\[\Aut_{\Op^{BS^1}}(\lD_2)\to \Aut_{\Op}(\flD_2)\to\Aut_{\Mon}(\SO(2))\]
The group $\Aut^h_{\Mon}(\SO(2))$ is identified with the group of homotopy automorphisms of $B\SO(2)\simeq \mathbb{C}P^{\infty}$ in the category of based spaces. Since $\mathbb{C}P^\infty$ is a $K(\mathbb{Z},2)$, we have
\[\pi_i\Map_*(\mathbb{C}P^\infty,\mathbb{C}P^\infty)\simeq\tilde{H}^{2-i}(\mathbb{C}P^\infty;\mathbb{Z})\]
It follows that $\Aut^h_{\Mon}(\SO(2))$ is the discrete group $\mathbb{Z}/2$.

By the second part of Theorem \ref{thm:main} and Theorem \ref{thm:aut lD top}, we have 
\[B\Aut_{\SO(2)\Op}(\lD_2)\simeq \Map(BS^1,B\O(2))_f\]
with $f:\SO(2)\to \O(2)\simeq \Aut^h_{\Op}(D_2)$ the action map. This map $f$ can be identified with the inclusion $\SO(2)\to \O(2)$. Since $\SO(2)$ is connected, we have
\[\Map(B\SO(2),B\O(2))_f\simeq \Map(B\SO(2),B\SO(2))_{id}\]
and this last mapping space can be computed similarly to what we just did. In the end, we find
\[\Aut_{\SO(2)\Op}(\lD_2)\simeq \SO(2)\]
so that we have a fiber sequence
\[\SO(2)\to \Aut^h_{\cat{Op}}(\flD_2)\to \mathbb{Z}/2\]
But we have an action of $O(2)$ on $\flD_2$ that fits into a map of fiber sequences 
\[
\begin{tikzcd}
    \SO(2)\ar{r}\ar{d}{=} & O(2)\ar{r}\ar{d}  & \Z/2 \ar{d}{=}
    \\  
    \SO(2)\ar{r} & \Aut^h_{\Op}(\flD_2)\ar{r}  & \Z/2
\end{tikzcd} 
\]
from which we conclude that $\Aut^h_{\Op}(\flD_2)\simeq O(2)$ as desired.
\hfill \qed

\subsection{Dg Lie algebras and rational homotopy theory}
For $\fg$ a filtered complete dg Lie algebra we consider the Maurer-Cartan space 
\[
\MC_\bullet(\fg) = \MC(\fg\hotimes \Omega(\Delta^\bullet))    
\]
and the exponential group 
\[
\Exp_\bullet(\fg) = Z(\fg\hotimes \Omega(\Delta^\bullet))    
\]
with $Z(-)$ taking the degree zero cocycles.
It is known \cite[Theorem 5.2]{BerglundMC} that
\[
B\Exp_\bullet(\fg^\alpha) \simeq \MC_\bullet(\fg)_{\alpha}
\]
where $\alpha\in \MC(\fg)$ is a Maurer-Cartan element.

We also recall the Quillen adjunction of rational homotopy theory 
\begin{equation}\label{equ:rht adj}
\Omega \colon \sSet \rightleftarrows \dgca^{op} \colon \G,   
\end{equation}
with $\dgca$ the category of dg commutative algebras, $\Omega=\Mor_{\sSet}(-,\Omega(\Delta^\bullet))$ the PL differential forms functor and $\G=\Mor_{\dgca}(-,\Omega(\Delta^\bullet))$ the geometric realization functor.


\subsection{Proof of Theorem \ref{thm:main flD Q}}
We follow the proof of Theorem \ref{thm:main flD top} above.
We first conduct several preparatory computations.
First, since $B\SO(2)^\Q$ is a $K(\Q,2)$,
\[
\pi_i \Aut_{\Mon}^h(\SO(2)^\Q)
=
\pi_i \Map_*(B\SO(2)^\Q, B\SO(2)^\Q) 
=
\tilde H^{2-i} ((\C P^\infty)^\Q, \Q) 
=
\begin{cases}
\Q & \text{for $i=0$} \\
0 & \text{otherwise}
\end{cases}.
\]
For the last equality we either use Hurewicz' Theorem or the fact that $\C P^\infty$ is $\Q$-good, that is, $H((\C P^\infty)^\Q, \Q)=H(\C P^\infty, \Q)$. We hence conclude that 
$$
\Aut_{\Mon}^h(\SO(2)^\Q)\simeq \Q.
$$

Similarly, We compute that 
\[
    \Map_* (B\SO(2)^\Q, B\Q^\times) \simeq * 
\]
is weakly contractible. For the unbased mapping space it hence follows that 
\begin{equation}\label{equ:map bso bqx}
    \Map(B\SO(2)^\Q, B\Q^\times) \simeq \Q^\times.     
\end{equation}

Next let $\grt_1$ be the (complete) Grothendieck-Teichmüller Lie algebra such that $\GRT_1=\Exp_\bullet(\grt_1)$. The Lie algebra $\grt_1$ has a complete weight-grading, and the pieces $\gr_W \grt_1$ of fixed weight are finite-dimensional.
We define the Lie coalgebra
\[
\grt_1^c:= \bigoplus_{W} (\gr_W \grt_1)^*.
\]
We also define the abelian graded Lie algebra $\fg:=\Q e$, with the generator $e$ concentrated in cohomological degree $-1$.
We then have that $\SO(2)^\Q=\Exp_\bullet(\fg)$.
Finally, for a Lie coalgebra $\fc$ we denote the Chevalley-Eilenberg complex (a dg commutative algebra) by 
\[
C(\fc) =(S(\fc[-1]), D ) .   
\]
Next we compute 
\begin{align*}
\Map(B\SO(2)^\Q, B(\GRT_1\times \SO(2)^\Q))
&=
\Map_{\sSet}(\MC_\bullet(\fg), \MC_\bullet(\grt_1\oplus \fg))
=
\Map_{\sSet}(\MC_\bullet(\fg), \G(C(\grt_1^c\oplus\fg^*)))
\\&=
\Map_{\dgca}(C(\grt_1^c\oplus\fg^*), \Omega(\MC_\bullet(\fg)))
\end{align*}
by adjunction.
Now we can apply \cite[Corollary 1.3]{Berglund} to see that the Chevalley complex of $\fg$ is a Sullivan model for $B\SO(2)^\Q=\MC_\bullet(\fg)$, that is, 
\[
    \Omega(\MC_\bullet(\fg)) \simeq C(\fg^c).  
\]
This means that 
\[
\Map(B\SO(2)^\Q, B(\GRT_1\times \SO(2)^\Q))
\simeq 
\Map_{\dgca}(C(\grt_1^c\oplus \fg^*), C(\fg^*))
=
\MC_\bullet((\grt_1\oplus \fg)\hotimes C(\fg^*))
=
\MC_\bullet((\grt_1\oplus \fg)\hotimes \Q[u]),
\]
where $u$ is a variable of degree $+2$ that is dual to the generator $e$ of $\fg$.
We are interested in the connected component corresponding to the MC element $\alpha=u\otimes x$, corresponding to the map 
\begin{gather*}
    B\SO(2)^\Q\to  B\GRT_1\times B\SO(2)^\Q \\
    x \mapsto * \times x.
\end{gather*}

We then have that 
\[
    \Map(B\SO(2)^\Q, B(\GRT_1\times \SO(2)^\Q))_\alpha
    =
    \MC_\bullet((\grt_1\oplus \fg)\hotimes \Q[u])_\alpha 
    =
    \MC_\bullet(\trunc( ((\grt_1\oplus \fg)\hotimes \Q[u])^\alpha ) )
\]
with $\trunc(-)$ the truncated dg Lie algebra.
But $\alpha$ is in the center of the Lie algebra $(\grt_1\oplus \fg)\hotimes \Q[u]$ and does not produce a differential upon twisting. 
    The truncation is then
    \[
    \trunc(((\grt_1 \oplus \fg)\hotimes \Q[u])^\alpha) 
    =\grt_1 \oplus \fg.
    \]
    Hence 
    \begin{equation}\label{equ:aut ld q proof 1}
        \Map(B\SO(2)^\Q, B(\GRT_1\ltimes \SO(2)^\Q))_\alpha 
        =
        B(\GRT_1\times \SO(2)^\Q).
    \end{equation}

Next consider the fiber sequence of simplicial groups
\[
    \GRT_1\times \SO(2)^\Q
    \to 
    \GRT\ltimes \SO(2)^\Q
    \to 
    \Q^\times,
\]
and an associated fiber sequence 
    \[
    \Map(B\SO(2)^\Q, B(\GRT_1\times \SO(2)^\Q)) 
    \to 
    \Map(B\SO(2)^\Q, B(\GRT\ltimes \SO(2)^\Q)) 
    \to 
    \Map(B\SO(2)^\Q,B\Q^\times).
    \]
    The base is equal to $\Q^\times$ by \eqref{equ:map bso bqx}.
    Restricting to connected components over $1\in \Q^\times$ we hence have 
    \[
        \Map(B\SO(2)^\Q, B(\GRT\ltimes \SO(2)^\Q))_{[1]}
        \simeq 
        \Map(B\SO(2)^\Q, B(\GRT_1\times \SO(2)^\Q)).
    \]
    We take the connected component corresponding to the inclusion of $\SO(2)^\Q$, corresponding to the MC element $\alpha$ above. Using \eqref{equ:aut ld q proof 1} this yields 
    \[
        \Map(B\SO(2)^\Q, B(\GRT\ltimes \SO(2)^\Q))_\alpha 
        \simeq
        B(\GRT_1\times \SO(2)^\Q).
    \]

Finally we use Theorem \ref{thm:main} for the case $\POp=\lD_2^\Q$ and $G=\SO(2)^\Q$ and Theorem \ref{thm:aut lD Q} to obtain from the previous equation that
\[
    \Aut_{\SO(2)^\Q \Op}^h(\lD_2^\Q) \cong \GRT_1\ltimes \SO(2)^\Q.
\]
Furthermore, the homotopy fiber sequence of Theorem \ref{thm:main} then reads,
\begin{equation}\label{equ:long Q}
\GRT_1\ltimes \SO(2)^\Q
\to 
\Aut_{\Op}^h(\flD_2^\Q) 
\to 
\Q^\times.
\end{equation}

The action of $\GRT\ltimes \SO(2)^{\Q}$ on $\lD_2^\Q$ extends to an action on $\flD_2^\Q$. (This action can be explicitly constructed using ribbon braids as in \cite{BHR} or can be deduced from the Lie algebra action on graphical models of $\flD_2^\Q$ as in \cite{Brun}.) Hence the final arrow in \eqref{equ:long Q} induces a surjective map on $\pi_0(-)$ and from the long exact sequence of homotopy groups we readily conclude that 
$$
\Aut_{\Op}^h(\flD_2^\Q) \simeq \GRT\ltimes \SO(2)^{\Q}.
$$
\hfill \qed

\subsection{Proof of Theorem \ref{thm:main flD profinite}}
We generically denote by $X\mapsto \widehat{X}$ the profinite completion functor in the category of groups, groupoids and simplicial sets and by $X\mapsto |X|$ the right adjoint to this functor.

Let $G\mapsto \B G$ be the functor that sends a group to the corresponding groupoid with one object. We let $\pab$ and $\parb$ denote the parenthesized braid operad and parenthesized ribbon braid operad respectively. There is an isomorphism
\[\parb\cong \pab\rtimes \B\Z.\]
(observe that for any abelian group $A$, then $\B A$ is canonically an abelian group object in groupoids).

We let $\hpab$ and $\hparb$ denote their profinie completion. These are operads in profinite groupoids (i.e. pro-objects in the category of groupoids with finitely many morphisms). The functor $|-|$ preserves products and it follows that $|\pab|$ and $|\parb|$ are operads in groupoids. Moreover, we observe that there is an isomorphism of operads in groupoids
\[|\hparb|\cong|\hpab|\rtimes\B|\widehat{\Z}|\]

\begin{prop}
There is a weak equivalence of simplicial monoids
\[\Aut^h_{\Op(\Gpd)}(|\hparb|)\simeq \widehat{\GT}\ltimes B|\widehat{\Z}|\]
\end{prop}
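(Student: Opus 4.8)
The plan is to apply Theorem~\ref{thm:main} to the case $\POp = |\hpab|$ and $G = B|\widehat{\Z}|$, so that $\QOp = \POp\rtimes G = |\hparb|$. To invoke the theorem we need $\POp(0) = *$ and $\POp(1)$ contractible; the parenthesized braid operad has a single object in arity $1$ with no nontrivial morphisms, and profinite completion and $|-|$ preserve this, so $|\hpab|(1)$ is terminal, hence contractible as a simplicial set (via the nerve). The arity-zero condition is similarly immediate. Thus the first part of Theorem~\ref{thm:main} yields a homotopy fiber sequence of simplicial monoids
\[
\Aut^h_{G\Op(\Gpd)}(|\hpab|)
\to
\Aut^h_{\Op(\Gpd)}(|\hparb|)
\to
\Aut^h_{\Mon}(B|\widehat{\Z}|),
\]
and the second part identifies $B\Aut^h_{G\Op(\Gpd)}(|\hpab|)$ with the component $\Map(BG, B\Aut^h_{\Op(\Gpd)}(|\hpab|))_{Bf}$ at the action map $f$.

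The next step is to feed in the known computation of $\Aut^h_{\Op(\Gpd)}(|\hpab|)$. By the profinite analogue of the Drinfeld--Horel identification (the operadic model of the profinite Grothendieck--Teichmüller group; cf. \cite{HorelProfinite,BHR}), one has $\Aut^h_{\La\Op(\Gpd)}(|\hpab|)\simeq \widehat{\GT}$, and by the homotopy-full-faithfulness of $\Op_*\subset\Op$ used in Corollary~\ref{cor:only one} (in its groupoid incarnation) the $\La$-operad automorphisms agree with the plain operad automorphisms. Similarly $\Aut^h_{\Mon}(B|\widehat{\Z}|)$ is computed from the self-maps of $B^2|\widehat{\Z}| = K(|\widehat{\Z}|,2)$ in pointed spaces; since $|\widehat{\Z}|$ carries the profinite topology this endomorphism monoid is the (discrete) monoid of continuous endomorphisms $\widehat{\Z}^\times$ together with the cyclotomic character datum, and restricting to homotopy automorphisms gives $\Aut^h_{\Mon}(B|\widehat{\Z}|)\simeq \widehat{\Z}^\times$. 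The action map $f:B|\widehat{\Z}|\to B\widehat{\GT}$ classifying the $\B\widehat{\Z}$-action on $|\hpab|$ is the one coming from the cyclotomic character $\widehat{\GT}\to\widehat{\Z}^\times$ and its section, i.e. it exhibits $B|\widehat{\Z}|$ as (a shift of) the abelianization target; in particular $Bf$ is the canonical inclusion.

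With these inputs, the second part of Theorem~\ref{thm:main} gives
\[
B\Aut^h_{G\Op(\Gpd)}(|\hpab|)\simeq \Map\!\left(B^2|\widehat{\Z}|,\, B\widehat{\GT}\right)_{Bf},
\]
and one evaluates this mapping space exactly as in the rational case (proof of Theorem~\ref{thm:main flD Q}): the fiber sequence $\widehat{\GT}_1\to\widehat{\GT}\to\widehat{\Z}^\times$ gives a fiber sequence of mapping spaces whose base $\Map(B^2|\widehat{\Z}|, B\widehat{\Z}^\times)$ is, since $B^2|\widehat{\Z}|$ is simply connected and $\widehat{\Z}^\times$ is discrete, weakly equivalent to the discrete set $\widehat{\Z}^\times$; restricting to the relevant component reduces to $\Map(B^2|\widehat{\Z}|, B\widehat{\GT}_1)$ at the tautological class, which as in the rational argument deloops to $B(\widehat{\GT}_1\ltimes B|\widehat{\Z}|)$ — here $\widehat{\GT}_1$ plays the role of the pro-unipotent/pro-finite ``nonabelian'' part and $B|\widehat{\Z}|$ the abelian ``Euler class'' direction. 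Delooping once yields $\Aut^h_{G\Op(\Gpd)}(|\hpab|)\simeq \widehat{\GT}_1\ltimes B|\widehat{\Z}|$, which is the first of the two asserted equivalences. Finally, the homotopy fiber sequence from the first part of Theorem~\ref{thm:main} becomes
\[
\widehat{\GT}_1\ltimes B|\widehat{\Z}|
\to
\Aut^h_{\Op(\Gpd)}(|\hparb|)
\to
\widehat{\Z}^\times,
\]
and exactly as in the topological and rational proofs, the genuine action of $\widehat{\GT}\ltimes B|\widehat{\Z}|$ on $|\hparb|$ (constructed via profinite ribbon braids as in \cite{BHR}) provides a splitting of $\pi_0$, so the long exact sequence forces $\Aut^h_{\Op(\Gpd)}(|\hparb|)\simeq \widehat{\GT}\ltimes B|\widehat{\Z}|$. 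The main obstacle I anticipate is the bookkeeping around the profinite completion functor not preserving products — this is precisely why one works with $|-|$ of profinite groupoids rather than profinite groupoids themselves — and making sure that the identifications $\Aut^h_{\Op(\Gpd)}(|\hpab|)\simeq\widehat{\GT}$ and $\Aut^h_{\Mon}(B|\widehat{\Z}|)\simeq\widehat{\Z}^\times$ are stated in a form where Theorem~\ref{thm:main} applies verbatim; everything downstream is a routine rerun of the rational computation.
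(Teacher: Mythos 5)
Your overall strategy (nerve/groupoid bookkeeping aside) is the same as the paper's: feed the known automorphism space of the non-framed operad into Theorem \ref{thm:main} and rerun the rational computation. But there is a genuine error in the input you feed in. You assert $\Aut^h_{\La\Op(\Gpd)}(|\hpab|)\simeq\widehat{\GT}$ as a discrete group. The correct statement, and the one the paper uses, is
\[
\Aut^h_{\Op}(N|\hpab|)\simeq \widehat{\GT}\ltimes B|\widehat{\Z}|,
\]
i.e.\ the automorphism space has $\pi_0=\widehat{\GT}$ \emph{and} $\pi_1=\widehat{\Z}$ (this is the profinite shadow of $\Aut^h_{\La\Op}(\lD_2)\simeq\O(2)=\Z/2\ltimes\SO(2)$, where the $\SO(2)=B\Z$ factor completes to $B\widehat{\Z}$). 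This is not a cosmetic point: the entire $B|\widehat{\Z}|$ factor in the final answer originates from this $\pi_1$. With your input the relevant mapping space is $\Map(B^2|\widehat{\Z}|,B\widehat{\GT}_1)$ with $B\widehat{\GT}_1$ a $K(\pi,1)$ and $B^2|\widehat{\Z}|$ simply connected, so that mapping space is just $B\widehat{\GT}_1$ --- it does \emph{not} ``deloop to $B(\widehat{\GT}_1\ltimes B|\widehat{\Z}|)$'' as you claim. Carried through consistently, your computation yields $\Aut^h_{G\Op}(|\hpab|)\simeq\widehat{\GT}_1$ and hence $\Aut^h_{\Op(\Gpd)}(|\hparb|)\simeq\widehat{\GT}$, which is wrong. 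In the paper's version the target of the mapping space is $B(\widehat{\GT}\ltimes B|\widehat{\Z}|)$, whose fiber over $B\widehat{\Z}^\times$ is $B\widehat{\GT}_1\times B^2|\widehat{\Z}|$, and the factor $\Map(B^2|\widehat{\Z}|,B^2|\widehat{\Z}|)_{B^2\id}\simeq B^2|\widehat{\Z}|$ (computed as in the rational case) is exactly what restores the missing $B|\widehat{\Z}|$. Your appeal to an ``Euler class direction'' is papering over this gap rather than closing it.

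Two smaller points. First, Theorem \ref{thm:main} is stated for simplicial (or topological) operads, so one should first pass through the nerve, using that $\Aut^h_{\Op(\Gpd)}(|\hparb|)\simeq\Aut^h_{\Op}(N|\hparb|)$ and that $N$ preserves products so $N|\hparb|\cong N|\hpab|\rtimes B|\widehat{\Z}|$; you apply the theorem directly in $\Op(\Gpd)$ without justifying a groupoid version. Second, your resolution of the final extension problem via the action of $\widehat{\GT}\ltimes B|\widehat{\Z}|$ on $|\hparb|$ coincides with the paper's and is fine.
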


\begin{proof}
First of all, since $|\hparb|$ is an operad in groupoids, we have a weak equivalence
\[\Aut^h_{\Op(\Gpd)}(|\hparb|)\simeq\Aut^h_{\Op}(N|\hparb|)\]
Since the nerve functor preserves product, there is a weak equivalence
\[N|\hparb|\simeq N|\hpab|\rtimes B|\widehat{\Z}|\]
so we can use our main theorem and we get a fiber sequence
\[\Aut^h_{|B\widehat{\Z}|\Op}(N|\hpab|)\to \Aut^h_{\Op}(N|\hparb|)\to\Aut^h_{\Mon}(B|\widehat{\Z}|)\]
The group $\Aut^h_{\Mon}(B|\widehat{\Z}|)$ can be computed as in the previous paragraph and we find
\[\Aut^h_{\Mon}(B|\widehat{\Z}|)\simeq \widehat{\Z}^{\times}\]
On the other hand, the group $\Aut^h_{\Op}(N|\hpab|)$ has been computed in \cite{HorelProfinite} and shown to be equivalent to the semi-direct product $\widehat{\GT}\ltimes B|\widehat{\Z}|$. It follows from the second part of our  main theorem that
\[\Aut^h_{|B\widehat{\Z}|\Op}(N|\hpab|)\simeq \left(\Map(B^2|\widehat{\Z}|,B(\widehat{\GT}\ltimes B|\widehat{\Z}|)\right)_{Bf}\]
We can compute this space using the fiber sequence 
\[B\widehat{\GT}_1\times B^2|\widehat{\Z}|\to B(\widehat{\GT}\ltimes B|\widehat{\Z}|)\to B\widehat{\Z}^\times\]
where $\widehat{\GT}_1$ denotes the kernel of the cylcotomic character
\[\chi:\widehat{\GT}\to\widehat{\Z}^\times.\]
We thus obtain a fiber sequence
\[\Map(B^2|\widehat{\Z}|,B\widehat{\GT}_1\times B^2|\widehat{\Z}|)\to 
\Map(B^2|\widehat{\Z}|,B(\widehat{\GT}\ltimes B|\widehat{\Z}|))\to 
\Map(B^2|\widehat{\Z}|,B\widehat{\Z}^\times)\]
The third space in this fiber sequence is identified with the discrete space $\widehat{\Z}^\times$ while the first space is the product 
\[\Map(B^2|\widehat{\Z}|,B\widehat{\GT}_1)\times \Map(B^2|\widehat{\Z}|,B^2|\widehat{\Z}|)\simeq B\widehat{\GT}_1\times \Map(B^2|\widehat{\Z}|,B^2|\widehat{\Z}|)\]
Since the base in this fiber sequence is discrete, it follows that
\[\left(\Map(B^2|\widehat{\Z}|,B(\widehat{\GT}\ltimes B|\widehat{\Z}|)\right)_{Bf}\simeq B\widehat{\GT}_1\times\left( \Map(B^2|\widehat{\Z}|,B^2|\widehat{\Z}|)\right)_{B^2\id}\simeq B\widehat{\GT}_1\times B^2|\widehat{\Z}|\]
where the last equivalence follows from a computation similar to the one in the previous subsection. Putting everything together, we get a fiber sequence
\[B\widehat{\GT}_1\times B^2|\widehat{\Z}|\to B\Aut^h_{\Op}(N|\hparb|)\to B(|\widehat{\Z}|^\times)\] 
which is what we wanted modulo solving the extension problem. 
The extension problem is solved by noting that the action of $\widehat{\GT}$ on $N|\hparb|$ extends to an action of $\widehat{\GT}\ltimes B|\widehat{\Z}|$.
(Generally, if a group $G$ acts on an operad $\POp$ such that $\POp(1)=H$ is a group, then the action extends to a $G\ltimes H$-action on $\POp$.)
\end{proof}

Let $\widehat{\flD_2}$ denote the profinite completion of the framed little disks operad. This can be viewed as a weak operad (i.e. a functor from the algebraic theory of operads to profinite spaces preserving products up to homotopy) as in \cite{HorelProfinite} or as a Segal dendroidal object in profinite spaces as in \cite{BHR}. In either case, we can compute the homotopy automorphisms in the relevant category and we have the following theorem.

\begin{thm}
We have
\[\Aut^h(\widehat{\flD}_2)\simeq \widehat{\GT}\ltimes B\widehat{\Z}\]
\end{thm}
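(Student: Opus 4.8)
The plan is to deduce the theorem from the Proposition above by identifying $\widehat{\flD}_2$ — viewed either as a weak operad or as a Segal dendroidal object in profinite spaces — with $N|\hparb|$. Granting such an identification, and using that in either model the homotopy automorphisms of a \emph{strict} operad agree with the ones computed in $\Op$, respectively in $\Op(\Gpd)$ (the comparison of models of \cite{HorelProfinite,BHR}), one obtains
\[
\Aut^h(\widehat{\flD}_2)\ \simeq\ \Aut^h(N|\hparb|)\ \simeq\ \Aut^h_{\Op(\Gpd)}(|\hparb|)\ \simeq\ \widehat{\GT}\ltimes B|\widehat{\Z}|\ \simeq\ \widehat{\GT}\ltimes B\widehat{\Z},
\]
the third equivalence being the Proposition above and the last being the identification of $\widehat{\Z}$ with its materialization $|\widehat{\Z}|$.

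The real content is therefore the equivalence $\widehat{\flD}_2\simeq N|\hparb|$, which I would establish in two steps. First, in the uncompleted setting: each space $\flD_2(r)=\lD_2(r)\times(S^1)^{\times r}$ is a $K(\pi,1)$, since $\lD_2(r)\simeq\Conf_r(\R^2)$ is a $K(P_r,1)$ for $P_r$ the pure braid group on $r$ strands and $(S^1)^{\times r}$ is a $K(\Z^{\times r},1)$. Hence $\flD_2$ is an aspherical operad, and it is recovered, up to a zig-zag of weak equivalences of operads in simplicial sets, from its parenthesized fundamental groupoid operad $\parb$; that is, $\flD_2\simeq N\parb$. This is the framed counterpart of the classical equivalence $\lD_2\simeq N\pab$ (see \cite{Frbook}, and \cite{BHR} for the framed case), and it is compatible with the semi-direct product decompositions $\flD_2=\lD_2\rtimes\SO(2)$ and $\parb\cong\pab\rtimes\B\Z$, via $N\B\Z\simeq\SO(2)$ and the fact that $-\rtimes\SO(2)$ is left Quillen by Lemma \ref{lem:semidirect adj}.

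Second, I would apply the materialized profinite completion $|\widehat{(-)}|$. On the left this produces $\widehat{\flD}_2$, which \emph{a priori} is only a weak operad since profinite completion does not commute with products. On the right, each groupoid $\parb(r)$ is connected, with automorphism group the pure ribbon braid group, an extension of the pure braid group $P_r$ by $\Z^{\times r}$; both of these are good in the sense of Serre (goodness of the braid groups is established in \cite{HorelProfinite,BHR}), hence so is the extension, and therefore $|\widehat{N\parb(r)}|\simeq N|\widehat{\parb(r)}|$ for every $r$. These arity-wise equivalences assemble into a weak equivalence of weak operads $|\widehat{N\parb}|\simeq N|\hparb|$, the target being the nerve of the honest operad in groupoids $|\hparb|\cong|\hpab|\rtimes\B|\widehat{\Z}|$. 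Combined with the first step this yields $\widehat{\flD}_2\simeq N|\hparb|$, and then the theorem follows from the display above.

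The hard part is precisely the failure of profinite completion to be monoidal: it forces one to work throughout in the category of weak operads (or of Segal dendroidal objects), and to promote the arity-wise equivalences $|\widehat{N\parb(r)}|\simeq N|\widehat{\parb(r)}|$ into an equivalence of weak operads one has to know that each $\parb(r)$ is a connected \emph{good} groupoid, i.e.\ that the pure ribbon braid groups are good. The remaining input — that the homotopy automorphisms of a strict operad computed inside $\WOp$ (resp.\ inside Segal dendroidal spaces) coincide with those in $\Op$ (resp.\ in $\Op(\Gpd)$) — is the comparison of models of \cite{HorelProfinite,BHR}.
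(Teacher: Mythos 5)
Your proposal is correct and follows the same overall strategy as the paper: reduce to the Proposition by identifying $\widehat{\flD}_2$ with $N|\hparb|$ as weak operads. The difference is in how that identification is established. The paper simply cites \cite[Lemma 8.3]{BHR} for the equivalence of weak operads in profinite spaces $N\hparb\simeq\widehat{\flD}_2$, whereas you unpack what is essentially the proof of that lemma: asphericity of $\flD_2$ (so $\flD_2\simeq N\parb$) together with goodness of the pure ribbon braid groups (as extensions of the good groups $P_r$ by $\Z^{\times r}$) to commute profinite completion past the nerve arity-wise. This makes your argument more self-contained but duplicates work already in the literature. One point you gloss over that the paper handles explicitly: $|-|$ is a \emph{right} adjoint and must be derived, so to conclude $|R\widehat{\flD}_2|\simeq N|\hparb|$ one needs to know that $N\hparb$ is already fibrant as a weak operad in profinite spaces (which the paper deduces from $N$ being right Quillen and the profinite groupoids $\hparb(n)$ being fibrant by \cite[Proposition 4.40]{HorelProfinite}); your direct application of $|-|$ to the arity-wise equivalences silently assumes this. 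With that point supplied, your argument is complete.
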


\begin{proof}
We use the language of weak operads as in \cite{HorelProfinite}. We can argue exactly as in \cite[Corollary 8.12]{HorelProfinite} that there is a weak equivalence
\[\Aut^h(\widehat{\flD}_2)\simeq \Aut^h(|R\widehat{\flD}_2|)\]
where $R$ denotes a fibrant replacement in the model structure of weak operads in profinite spaces. Now, we claim that $N|\widehat{\parb}|$ is weakly equivalent to $|R\widehat{\flD}_2|$ in the model structure of weak operads in profinite spaces. This will conclude the proof thanks to the previous proposition. In order to prove the claim, it suffices to observe that there is a weak equivalence of weak operads in profinite spaces
\[N\hparb\simeq \widehat{\flD}_2\]
which is proved in \cite[Lemma 8.3]{BHR} and that $N\hparb$ is fibrant as a weak operad in profinite spaces. This comes from the fact that the functor $N$ is a right Quillen functor and that the profinite groupoids $\hparb(n)$ are fibrant by \cite[Proposition 4.40]{HorelProfinite}.
\end{proof}

\subsection{A remark about the non-unital case}
Our theorems above are for the unital (framed) little disks operad, i.e. we have $\lD_2(0)\simeq *$. But we also immediately derive an analogous result for the non-unital little disks operad $\lD_2^{nu}$ defined by
\begin{equation}
    \lD_2^{nu}(n) =
    \begin{cases*}
      \varnothing & if $n=0$ \\
      \lD_2(n)       & if $n\geq 1$
    \end{cases*}
  \end{equation}
  with the obvious operad structure ; and also the non-unital framed little disks operad $\flD_2^{nu}$ defined similarly. In that case we have the following theorems.

\begin{thm}\label{thm:non-unital thm}
There are weak equivalences of simplicial monoids
\[\Aut^h_{\SO(2)\Op}(\lD_2^{nu})\simeq \SO(2)\]
\[\Aut^h_{\Op}(\flD_2^{nu})\simeq \O(2)\]
\end{thm}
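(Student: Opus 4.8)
The plan is to reduce Theorem~\ref{thm:non-unital thm} to the unital case, Theorem~\ref{thm:main flD top}, by comparing the homotopy automorphisms of an operad $\POp$ with $\POp(0)=*$ with those of its non-unital truncation $\POp^{nu}$, the sub-operad of $\POp$ consisting of the operations of positive arity. The only bookkeeping point is that truncation commutes with the semidirect product, because $(\POp\rtimes G)(r)=\POp(r)\times G^{\times r}$ in every arity; hence $\flD_2^{nu}=(\flD_2)^{nu}=\lD_2^{nu}\rtimes\SO(2)$, and the two operads in Theorem~\ref{thm:non-unital thm} are precisely the non-unital truncations of the operads $\lD_2$ and $\flD_2$ of Theorem~\ref{thm:main flD top}, both of which have a point in arity zero.

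The key input is the following comparison statement: for operads $\POp,\QOp$ with $\POp(0)=\QOp(0)=*$, restriction along the inclusion $\POp^{nu}\hookrightarrow\POp$ induces a weak equivalence of derived mapping spaces
\[
\Map^h_{\Op}(\POp,\QOp)\longrightarrow\Map^h_{\Op}(\POp^{nu},\QOp^{nu}),
\]
and therefore, taking $\POp=\QOp$ and restricting to invertible components, a weak equivalence of simplicial monoids $\Aut^h_{\Op}(\POp)\simeq\Aut^h_{\Op}(\POp^{nu})$. Conceptually this is the statement that the $\Lambda$-structure carried by such operads, i.e.\ composition with the arity-zero operation, is homotopically canonical. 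I would obtain it from the homotopical full faithfulness of $\Op_*\subset\Op$ of \cite{FTW3} that is already used in Corollary~\ref{cor:only one}, combined with the fact that the forgetful functor from $\Op_*$ to non-unital operads is homotopically fully faithful on operads of this kind; should a reference in precisely this form be unavailable, one can argue as in the proof of Lemma~\ref{lem:one adj}, by a transfinite induction over cell attachments that reduces the claim to free operads, where it is immediate.

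Granting the comparison, the theorem follows at once. Applied to $\POp=\flD_2$ it gives $\Aut^h_{\Op}(\flD_2^{nu})\simeq\Aut^h_{\Op}(\flD_2)\simeq\O(2)$ by Theorem~\ref{thm:main flD top}. For the equivariant statement no equivariant comparison is needed: the second part of Theorem~\ref{thm:main}, whose proof through Proposition~\ref{prop : equivariant Aut} is purely $\infty$-categorical and hence applies to the operad $\lD_2^{nu}$ as well, gives $B\Aut^h_{\SO(2)\Op}(\lD_2^{nu})\simeq\Map\bigl(B\SO(2),B\Aut^h_{\Op}(\lD_2^{nu})\bigr)_{Bf}$; and since $\Aut^h_{\Op}(\lD_2^{nu})\simeq\O(2)$ by the comparison (using, as in Theorem~\ref{thm:main flD top}, that $\Aut^h_{\Op}(\lD_2)\simeq\O(2)$ by Theorem~\ref{thm:aut lD top}) with $f$ the inclusion $\SO(2)\to\O(2)$, the computation of this mapping space is word-for-word the one in the proof of Theorem~\ref{thm:main flD top}, yielding $B\Aut^h_{\SO(2)\Op}(\lD_2^{nu})\simeq B\SO(2)$ and hence $\Aut^h_{\SO(2)\Op}(\lD_2^{nu})\simeq\SO(2)$.

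The main obstacle is this unital--non-unital comparison: it is the only non-formal ingredient, everything else being elementary bookkeeping or a direct transcription of arguments already in the paper. If \cite{FTW3} (or \cite{Frbook,Frextended}) does not contain it in the form required, it would have to be proved by the cell-by-cell method used in Section~2, for which the needed infrastructure --- the projective and Reedy model structures, the $\Lambda$-operad formalism, and the computations with free operads --- is already in place.
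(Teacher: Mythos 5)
Your overall reduction --- truncation commutes with $\rtimes$, so $\flD_2^{nu}=\lD_2^{nu}\rtimes\SO(2)$, and everything follows from Theorem \ref{thm:main flD top} once one knows that passing to the non-unital truncation does not change homotopy automorphisms --- is exactly the paper's strategy, as is your use of Proposition \ref{prop : equivariant Aut} to handle the equivariant statement. The gap is in the one step you yourself flag as the ``only non-formal ingredient'': the unital/non-unital comparison. You state it as a general fact for arbitrary operads with $\POp(0)=\QOp(0)=*$ and propose to deduce it from \cite{FTW3} together with ``the fact that the forgetful functor from $\Op_*$ to non-unital operads is homotopically fully faithful''. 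That second fact is false in general. What \cite{FTW3} gives is that $\Op_*\subset\Op$ is homotopically fully faithful, i.e.\ it identifies $\Map^h_{\Op}(\POp,\QOp)$ with a mapping space of $\La$-operads in positive arities, meaning maps compatible with the restriction operations $\POp(r)\to\POp(r-1)$ given by composition with the arity-zero element. Forgetting that $\La$-structure on top is a genuine loss: a map of plain non-unital operads need not commute, even up to coherent homotopy, with the restriction operations, and the discrepancy is governed by the homotopy type of the target (its matching objects), not by a cell structure on the source. This is also why your fallback of a transfinite induction over cell attachments cannot work: the obstruction sits on the $\QOp$-side, and the base case does not even typecheck, since a free operad on positive-arity generators has empty, not one-point, arity-zero part.

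The statement you need is a theorem specific to the little disks operads: \cite[Theorem 2.7]{horelremarks} shows that $\Aut^h_{\Op}(\lD_2)\to\Aut^h_{\Op}(\lD_2^{nu})$ is a weak equivalence, and this is precisely what the paper cites. With that input your argument closes up essentially as the paper's does: the equivalence transports to $\Aut^h_{\SO(2)\Op}$ via Proposition \ref{prop : equivariant Aut}, giving the first claim, and the second claim follows by rerunning the fiber-sequence argument of Theorem \ref{thm:main flD top} for $\lD_2^{nu}\rtimes\SO(2)$. Note that the paper takes this latter route rather than applying the unital/non-unital comparison to $\flD_2$ directly, as you propose; that shortcut would require the comparison for the framed operad, which is not literally an instance of a statement about little cubes operads.
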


\begin{proof}
By \cite[Theorem 2.7]{horelremarks}, the map
\[\Aut^h_{\Op}(\lD_2)\to\Aut^h_{\Op}(\lD_2^{nu})\]
is a weak equivalence of simplicial monoids. It follows from Theorem \ref{thm:main} that
\[\Aut^h_{\SO(2)\Op}(\lD_2)\to\Aut^h_{\SO(2)\Op}(\lD_2^{nu})\]
is a weak equivalence as well. We therefore obtain the first claim by Theorem \ref{thm:main flD top}. The second claim is proved exactly as in Theorem \ref{thm:main flD top} using the observation that $\flD_2^{nu}=\lD_2^{nu}\rtimes\SO(2)$.
\end{proof}

We also have an analogue of Theorem \ref{thm:main flD profinite}.

\begin{thm}\label{thm:non-unital thm profinite}
There is a weak equivalence of simplicial monoids
\[\Aut^h(\widehat{\flD_2^{nu}})\simeq \widehat{\GT}\ltimes B\widehat{\Z}\]
\end{thm}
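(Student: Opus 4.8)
The plan is to deduce the statement from the theorem above computing $\Aut^h(\widehat{\flD}_2)\simeq\widehat{\GT}\ltimes B\widehat{\Z}$, by showing that discarding the arity-zero operations does not change the homotopy automorphisms; this is the profinite counterpart of the way Theorem \ref{thm:non-unital thm} is deduced from Theorem \ref{thm:main flD top}. I would work throughout in the model category of weak operads in profinite spaces (equivalently, the Segal dendroidal setting of \cite{BHR}). The non-full inclusion of operads $\flD_2^{nu}\hookrightarrow\flD_2$ induces, after profinite completion and fibrant replacement, a restriction map of simplicial monoids
\[
\rho\colon \Aut^h(\widehat{\flD}_2)\longrightarrow \Aut^h(\widehat{\flD_2^{nu}}),
\]
and the whole proof reduces to showing that $\rho$ is a weak equivalence.

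The first and crucial step is the unframed case: a weak-operad-in-profinite-spaces analogue of \cite[Theorem 2.7]{horelremarks}, to the effect that the restriction-to-positive-arities map $\Aut^h(\widehat{\lD}_2)\to\Aut^h(\widehat{\lD_2^{nu}})$ is a weak equivalence. I expect this to be obtained by running the comparison of \cite{horelremarks} over the relevant profinite coefficient systems, in the same spirit in which the rational statements of this paper are obtained by running the topological arguments over $\Q$. Granting it, part 2 of Theorem \ref{thm:main}, i.e.\ the purely $\infty$-categorical Proposition \ref{prop : equivariant Aut} (which puts no constraint on the arity-zero or arity-one parts), upgrades it to an equivalence of equivariant homotopy automorphisms
\[
\Aut^h_{|\widehat{\SO(2)}|\WOp}(\widehat{\lD}_2)\ \simeq\ \Aut^h_{|\widehat{\SO(2)}|\WOp}(\widehat{\lD_2^{nu}}),
\]
since both sides have the same classifying space, namely the component $\Map\!\left(B|\widehat{\SO(2)}|,\,B\Aut^h(\widehat{\lD}_2)\right)_{Bf}$, once the underlying homotopy automorphisms agree.

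It remains to transport this along the semidirect product construction. As in the proofs above, the nerve functor and the right adjoint $|-|$ to profinite completion preserve products, so $\widehat{\flD_2^{nu}}$ is the semidirect product $\widehat{\lD_2^{nu}}\rtimes|\widehat{\SO(2)}|$ of weak operads in profinite spaces (concretely $N\widehat{\parb^{nu}}$ with $\parb^{nu}=\pab^{nu}\rtimes\B\Z$, after a non-unital version of \cite[Lemma 8.3]{BHR}). The proof of part 1 of Theorem \ref{thm:main} then applies to $\widehat{\lD_2^{nu}}$ exactly as it does to $\widehat{\lD}_2$: the hypothesis $\POp(0)=*$ enters only through Corollary \ref{cor:only one}, whose non-unital counterpart is the assertion $\Map^h_{\Op}(\lD_2^{nu},\Com\rtimes G)\simeq *$, and this holds because $\lD_2^{nu}(1)\simeq *$ (this is precisely the point already implicit in the proof of Theorem \ref{thm:non-unital thm}). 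Consequently one gets, for $\widehat{\flD_2^{nu}}$, a homotopy fiber sequence with fiber $\Aut^h_{|\widehat{\SO(2)}|\WOp}(\widehat{\lD_2^{nu}})$ and base $\Aut^h_{\Mon}(|\widehat{\SO(2)}|)\simeq\widehat{\Z}^\times$, and $\rho$ gives a map from the corresponding sequence for $\widehat{\flD}_2$ that is the identity on bases and, by the previous paragraph, a weak equivalence on fibers; the five lemma then shows $\rho$ is a weak equivalence, and combining with the value $\Aut^h(\widehat{\flD}_2)\simeq\widehat{\GT}\ltimes B\widehat{\Z}$ finishes the proof. (If one prefers to argue as in the proof of the theorem on $\widehat{\flD}_2$ itself, one instead computes the fiber $\Aut^h_{|\widehat{\SO(2)}|\WOp}(\widehat{\lD_2^{nu}})\simeq\widehat{\GT}_1\ltimes\widehat{\SO(2)}$ from the displayed equivalence and then settles the extension over $\widehat{\Z}^\times$ using the $\widehat{\GT}\ltimes B\widehat{\Z}$-action on $\widehat{\flD_2^{nu}}$ that comes, by restriction, from the one on $\widehat{\flD}_2$.)

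The main obstacle is unmistakably the first step, the profinite analogue of \cite[Theorem 2.7]{horelremarks}. Everything after it — the equivariant upgrade, the semidirect product bookkeeping, and the comparison of fiber sequences — is formal, but this step genuinely involves the interaction of profinite completion with the passage from unital to non-unital operads, and one must check that the comparison of \cite{horelremarks} (governed by suitable obstruction groups, or equivalently by a fully faithfulness statement for the inclusion of non-unital operads) is compatible with completion and with the weak-operad formalism.
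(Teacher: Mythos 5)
Your overall strategy --- reduce to the unital theorem by showing that the restriction map $\Aut^h(\widehat{\flD_2})\to\Aut^h(\widehat{\flD_2^{nu}})$ is a weak equivalence --- is the right one, but the argument as written rests entirely on an input you do not supply and yourself flag as the main obstacle: a profinite/weak-operad analogue of \cite[Theorem 2.7]{horelremarks}. Everything downstream of that step (the equivariant upgrade via Proposition \ref{prop : equivariant Aut}, the re-run of the semidirect-product fiber sequence, the five lemma) is formal, so at present the proposal reduces the theorem to an unproved statement rather than proving it; moreover, the detour through the unframed operad and the comparison of fiber sequences is not needed at all.

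The paper closes exactly this gap without ever invoking a profinite version of the cited theorem. Using the Quillen adjunction between profinite spaces and spaces one identifies
\[
\Aut^h(\widehat{\flD_2^{nu}})\simeq \Map^{'h}\bigl(\flD_2^{nu},|R\widehat{\flD_2^{nu}}|\bigr),
\qquad
\Aut^h(\widehat{\flD_2})\simeq \Map^{'h}\bigl(\flD_2,|R\widehat{\flD_2}|\bigr),
\]
where $R$ is a fibrant replacement in weak operads in profinite spaces. The point is that the right-hand sides are derived mapping spaces of \emph{ordinary} operads, with source the uncompleted framed little disks operad and target an ordinary (realized) operad; so \cite[Theorem 2.7]{horelremarks} applies verbatim to give $\Map^{h}(\flD_2,|R\widehat{\flD_2}|)\simeq\Map^{h}(\flD_2^{nu},|R\widehat{\flD_2^{nu}}|)$, whence the restriction map is a weak equivalence and the theorem follows from the already established $\Aut^h(\widehat{\flD_2})\simeq\widehat{\GT}\ltimes B\widehat{\Z}$. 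If you insist on your outline, you would have to actually carry out the ``profinite coefficient systems'' argument you only sketch; the adjunction trick is precisely how one avoids that work.
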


\begin{proof}
By the Quillen adjunction between profinite spaces and spaces, we have identifications
\[\Aut^h(\widehat{\flD_2^{nu}})\simeq \Map^{'h}(\flD_2^{nu},|R\widehat{\flD_2^{nu}}|), \;\;\Aut^h(\widehat{\flD_2})\simeq \Map^{'h}(\flD_2,|R\widehat{\flD_2}|) \]
where $R$ is a fibrant replacement in the category of weak operads in profinite spaces. By \cite[Theorem 2.7]{horelremarks}, we have a weak equivalence
\[\Map^h(\flD_2,|R\widehat{\flD_2}|)\simeq \Map^h(\flD_2^{nu},|R\widehat{\flD_2^{nu}}|)\]
Putting everything together, we find that the map
\[\Aut^h(\widehat{\flD_2})\to \Aut^h(\widehat{\flD_2^{nu}})\]
is a weak equivalence.
\end{proof}

\begin{rem}
We believe that Theorem \ref{thm:main flD Q} also holds for the non-unital framed little disks operad, however, this does not follow immediately from \cite[Theorem 2.3]{horelremarks} since rationalization is not a localization.
\end{rem}

\section{Homotopy automorphisms of the Batalin-Vilkovisky cooperad}

It has been shown by B. Fresse \cite{Frextended} that the classical Quillen adjunction \eqref{equ:rht adj} of rational homotopy theory can be extended to a Quillen adjunction 
\[
\Omega_\sharp\colon  \Op_* \rightleftarrows (\La\HOpc)^{op}
\colon \G     
\]
between the category of simplicial ($\Lambda$) operads with the Reedy model structure and the category of dg $\La$ Hopf cooperads $\La\HOpc$, that is, $\La$ cooperads in the category of dg commutative algebras.
In this setting, the rationalization of a cofibrant operad $\POp\in \Op_*$ is defined as 
\[
\POp^\Q := \G (\widehat{ \Omega_\sharp\POp} ),     
\]
where $\widehat {\Omega_\sharp\POp}$ is a fibrant replacement of $\Omega_\sharp\POp$ in $\La\HOpc$.

By formality of the framed little disks operad \cite{Severa2010, GiansiracusaSalvatore}, we know that $\Omega_\sharp(\flD_2)\simeq H^\bullet(\flD_2;\Q):=\BV^c$, with $\BV^c$ the Batalin-Vilkovisky cooperad.
By adjunction we have that 
\[
 \Aut_{\La\HOpc}^h(\BV^c)
\cong 
\Map_{\Op_*}^{'h}(\flD_2, \flD_2^\Q)
\simeq \Map_{\Op}^{'h}(\flD_2, \flD_2^\Q).
\]
This is a priori different from the simplicial monoid computed in Theorem \ref{thm:main flD Q}.
(The underlying problem is that $\flD_2$ is not known to be $\Q$-good.)
However, we may compute $\Aut_{\La\HOpc}^h(\BV^c)$ along the same lines as above to yield the following result.

\begin{thm}\label{thm:BV}
    We have a weak equivalence of simplicial monoids
    \[
        \Aut_{\La\HOpc}^h(\BV^c) \simeq  
        \Aut_{\Op}^h(\flD_2^\Q)
        \simeq 
        \GRT\ltimes \SO(2)^\Q.
    \]
\end{thm}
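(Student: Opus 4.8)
The plan is to mimic the structure of the proof of Theorem \ref{thm:main flD Q}, but working in the category of dg $\La$ Hopf cooperads $\La\HOpc$ rather than in simplicial operads, so as to avoid the $\Q$-goodness issue for $\flD_2$. The starting point is the adjunction identification already recorded above,
\[
\Aut_{\La\HOpc}^h(\BV^c)
\cong
\Map_{\Op_*}^{'h}(\flD_2, \flD_2^\Q)
\simeq
\Map_{\Op}^{'h}(\flD_2, \flD_2^\Q),
\]
the last step using that $\Op_*\subset\Op$ is homotopically fully faithful by \cite[Theorem 1]{FTW3}. The right-hand side is not obviously $\Aut^h_{\Op}(\flD_2^\Q)$ because $\flD_2$ need not coincide with its own rationalization source; however, by formality $\Omega_\sharp(\flD_2)\simeq \BV^c$ and $\BV^c$ itself is a Hopf cooperad of finite type in each arity, so $\flD_2^\Q = \G(\widehat{\BV^c})$ \emph{is} a rationalization of $\flD_2$ in the sense of Fresse, and the mapping space $\Map_{\Op}^{'h}(\flD_2,\flD_2^\Q)$ is computed by the derived mapping space in $\La\HOpc$ out of the cofibrant-enough model $\Omega_\sharp(\flD_2)\simeq \BV^c$ into itself, i.e. it equals $\Aut^h_{\La\HOpc}(\BV^c)$. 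This is the content of the first displayed equivalence in the theorem.

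For the second equivalence, $\Aut^h_{\La\HOpc}(\BV^c)\simeq \GRT\ltimes\SO(2)^\Q$, I would run the semidirect-product analysis of Theorem \ref{thm:main} at the level of Hopf cooperads. Dually to $\flD_2 = \lD_2\rtimes\SO(2)$ one has $\BV^c = \e_2^c \rtimes (\SO(2)^\Q)^c$ as $\La$ Hopf cooperads, where $\e_2^c = H^\bullet(\lD_2;\Q)$ is the Gerstenhaber cooperad and $(\SO(2)^\Q)^c = H^\bullet(S^1;\Q)$; here the semidirect product and the $\SO(2)$-coaction are the cohomological shadows of \eqref{equ:semidirect comp}. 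Applying the same fibration-sequence machinery as in Lemmas \ref{lem:unary fib}, \ref{lem:QG ComG fib} and \ref{lem:pre part1}, but in the (opposite of the) category $\La\HOpc$ with its Reedy model structure, yields a homotopy fiber sequence
\[
\Aut^h_{\La\HOpc}(\e_2^c)^{\SO(2)}
\to
\Aut^h_{\La\HOpc}(\BV^c)
\to
\Aut^h_{\mathrm{coMon}}(H^\bullet(S^1;\Q)),
\]
the analogue of \eqref{equ:main fib seq}; the base is $\Q^\times$ (the automorphisms of the free graded-commutative coalgebra on one degree-$1$ generator). By Fresse's computation $\Aut^h_{\La\HOpc}(\e_2^c)\simeq \Aut^h_{\Op}(\lD_2^\Q)\simeq \GRT\ltimes\SO(2)^\Q$ (Theorem \ref{thm:aut lD Q}), and the $\SO(2)$-equivariant version is handled exactly as in the proof of Theorem \ref{thm:main flD Q}: the second part of Theorem \ref{thm:main} (or its Hopf-cooperadic analogue) identifies $B\Aut^h_{\SO(2)^\Q\Op}(\lD_2^\Q)$ with a component of $\Map(B\SO(2)^\Q, B(\GRT\ltimes\SO(2)^\Q))$, and the dg-Lie-algebra computation already carried out there — using $\Omega(\MC_\bullet(\fg))\simeq C(\fg^c)$ and the fact that the relevant MC element $\alpha=u\otimes x$ is central — gives $\Aut^h_{\SO(2)^\Q\Op}(\lD_2^\Q)\simeq \GRT_1\ltimes\SO(2)^\Q$. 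Feeding this into the fiber sequence above produces $\GRT_1\ltimes\SO(2)^\Q \to \Aut^h_{\La\HOpc}(\BV^c)\to\Q^\times$, and the extension is split by the explicit $\GRT\ltimes\SO(2)^\Q$-action on $\BV^c$ (coming from the action on $\flD_2^\Q$), whence $\Aut^h_{\La\HOpc}(\BV^c)\simeq\GRT\ltimes\SO(2)^\Q$.

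The main obstacle I anticipate is justifying that the formal machinery of Section 3 — the fibration lemmas and especially Corollary \ref{cor:only one}, which relied on the "$\POp(1)$ contractible" hypothesis and the Quillen adjunction of Lemma \ref{lem:Op star adj} — transports cleanly to $\La\HOpc$ with its Reedy model structure, including the correct notion of "operad under the group object $H^\bullet(S^1;\Q)$" and the vanishing of the relevant derived mapping space into $\hCom\rtimes H$. One must check that $\e_2^c(1)\simeq \Q$ (the cohomology of $\lD_2(1)\simeq *$) so the analogue of Corollary \ref{cor:only one} applies, and that $\widehat{\BV^c}$ is a fibrant $\La$ Hopf cooperad whose underlying object decomposes as the appropriate semidirect product up to weak equivalence. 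A cleaner route that sidesteps rebuilding Section 3 in $\La\HOpc$ is to observe that the first equivalence already reduces everything to $\Map_{\Op}^{'h}(\flD_2,\flD_2^\Q)$; since $\lD_2$ \emph{is} $\Q$-good (its homotopy and cohomology are finite type), Theorem \ref{thm:main flD Q} computes $\Map_{\Op}^{'h}(\lD_2,\lD_2^\Q)=\Aut^h_{\Op}(\lD_2^\Q)$, and one then only needs the semidirect-product comparison of Theorem \ref{thm:main} for the \emph{source} operads together with the identification $\flD_2^\Q = \lD_2^\Q\rtimes\SO(2)^\Q$ (which holds because $\Omega_\sharp$ is symmetric monoidal and sends semidirect products to semidirect products), reducing the theorem to precisely the computation already performed for $\Aut^h_{\Op}(\flD_2^\Q)$.
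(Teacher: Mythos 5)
Your second paragraph is essentially the paper's own proof: the paper likewise transports the machinery of Section 3 to $\La\HOpc$, obtains a homotopy fiber sequence $\Aut^h_{(\La\HOpc)^{/A}}(\BV^c)\to\Aut^h_{\La\HOpc}(\BV^c)\to\Aut_{\dgca}(A)$ with $A=H^\bullet(S^1)$, identifies the fiber with $\Aut^h_{\SO(2)^\Q\La\HOpc}(\e_2^c)$ via the Hopf-cooperad analogues of Lemmas \ref{lem:semidirect adj}, \ref{lem:QG ComG fib} and Corollary \ref{cor:only one}, inputs $\Aut^h_{\La\HOpc}(\e_2^c)\cong\GRT\ltimes\SO(2)^\Q$ from \cite{Frbook,FWAut}, and finishes with Proposition \ref{prop : equivariant Aut} and the Lie-algebra computation already done for Theorem \ref{thm:main flD Q}. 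The transport issues you flag are precisely what the paper leaves implicit in its ``proof sketch''.

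However, two of your surrounding claims are wrong. First, the adjunction identity $\Aut^h_{\La\HOpc}(\BV^c)\cong\Map^{'h}_{\Op}(\flD_2,\flD_2^\Q)$ is \emph{not} ``the content of the first displayed equivalence'': that equivalence asserts $\Aut^h_{\La\HOpc}(\BV^c)\simeq\Aut^h_{\Op}(\flD_2^\Q)$, and replacing the source $\flD_2$ by $\flD_2^\Q$ in the mapping space is exactly the step that would require $\flD_2$ to be $\Q$-good, which is not known. In the paper the first equivalence is obtained only a posteriori, because both sides are independently computed to be $\GRT\ltimes\SO(2)^\Q$; your proof as a whole still yields the theorem this way (your second paragraph computes the left-hand side, Theorem \ref{thm:main flD Q} the middle), but the justification you give for the first equivalence is circular. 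Second, the ``cleaner route'' at the end fails for the same reason: $\lD_2$ is not known to be $\Q$-good --- finite type is not sufficient, since the spaces $\lD_2(n)$ are aspherical with non-nilpotent fundamental groups (the pure braid groups), and goodness of $\lD_2$ would imply goodness of $\flD_2=\lD_2\rtimes\SO(2)$, contradicting the paper's explicit caveat. So $\Map^{'h}_{\Op}(\lD_2,\lD_2^\Q)$ cannot be identified with $\Aut^h_{\Op}(\lD_2^\Q)$ by this argument; if that shortcut were available, the entire Hopf-cooperad detour (and Theorem \ref{thm:BV} as a separate statement) would be unnecessary.
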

\begin{proof}[Proof sketch.]
We may adapt the proof of Theorem \ref{thm:main} above for the case of $\La$ Hopf cooperads instead of topological operads.
One has a homotopy fiber sequence 
\[
    \Aut^h_{(\La\HOpc)^{/A}}(\BV^c) 
    \to 
    \Aut_{\La\HOpc}^h(\BV^c) 
    \to 
    \Aut_{\dgca}(A)
\]
with $A:=\BV^c(1)=H^\bullet(S^1)$.
We furthermore have, by the Hopf cooperad analog of the adjunction of Lemma \ref{lem:semidirect adj},
\[
    \Aut^h_{(\La\HOpc)^{/A}}(\BV^c) 
    \simeq 
    \Map^{'h}_{\SO(2)^\Q\La\HOpc}(\BV^c, \e_2^c) 
\]
with $\e_2^c=H^\bullet(\lD_2)$ the Gerstenhaber cooperad.
Lemma \ref{lem:QG ComG fib} and Corollary \ref{cor:only one} also have Hopf cooperad analogs that together yield 
\[
    \Map^{'h}_{\SO(2)^\Q\La\HOpc}(\BV^c, \e_2^c) 
    \simeq \Aut^h_{\SO(2)^\Q\La\HOpc}(\e_2^c),
\]
so that the inclusion
\[
    \Aut^h_{\SO(2)^\Q\La\HOpc}(\e_2^c) \to \Aut^h_{(\La\HOpc)^{/A}}(\BV^c) 
\]
is a weak equivalence of simplicial monoids.
By the computations of \cite{Frbook, FWAut} we have that 
\[
    \Aut^h_{\La\HOpc}(\e_2^c) \cong \GRT \ltimes \SO(2)^\Q.    
\]
Then Proposition \ref{prop : equivariant Aut} tells us that 
\[
    B\Aut^h_{\SO(2)^\Q\La\HOpc}(\e_2^c)  
    \simeq 
    \Map(B\SO(2)^\Q, B\Aut^h_{\La\HOpc}(\e_2^c)).
\]
From this point on the computations are the same as in the proof of Theorem \ref{thm:main flD Q} above, and yield the stated result.
\end{proof}




\bibliographystyle{amsalpha}

\end{document}